\newlength\nestedcwidth
\newcommand{\nested}[3]{%
  \setlength{\nestedcwidth}%
  {\maxof{\widthof{#1}}{\maxof{\widthof{#2}}{\widthof{#3}}}}%
  \fbox{\parbox{\nestedcwidth+4\fboxsep+4\fboxrule}{%
      \centering#3\\[.25\baselineskip]
      \fbox{\parbox{\nestedcwidth+2\fboxsep+2\fboxrule}{%
          \centering#2\\[.25\baselineskip]
          \fbox{\parbox{\nestedcwidth}{\centering#1}}%
        }}%
    }}%
}
\tikzstyle{reverseclip}=[insert path={(-99cm, -99cm) rectangle (99cm, 99cm)}]
\tikzstyle{framed}=[draw,fill=white,drop shadow,inner sep=1.3ex,outer sep=5pt]
\setlist[itemize]{
  topsep=.5em,
  parsep=2pt,
  itemsep=.5\parskip,
%  leftmargin=*,
  labelsep=.5em
}
\setlist[enumerate,1]{
  topsep=.5em,
  parsep=2pt,
  itemsep=.5\parskip,
%  leftmargin=*,
  labelsep=.5em
}
\setlist[description]{
  font=\normalfont\itshape
}
\newtheorem{definition}{Definition}[section]
\newtheorem{example}{Example}[section]
\newtheorem{proposition}{Proposition}[section]
\newtheorem{fact}{Fact}[section]
\newtheorem{lemma}{Lemma}[section]
\newtheorem{corollary}{Corollary}[section]
\newcommand\pto{\mathrel{\ooalign{\hfil$\mapstochar$\hfil\cr$\to$\cr}}}
\newcommand{\co}{\,\colon\;}
\newcommand{\ra}{\rightarrow}
\newcommand{\Ra}{\Rightarrow}
\newcommand{\st}{\mathrm{st}}
\newcommand{\op}{\mathrm{op}}
\newcommand{\T}{\mathcal{T}\!\!}
\newcommand{\impl}{\Rightarrow}
\newcommand{\mpair}[2]{\langle {#1},\,{#2} \rangle} %math mode pair
\newcommand{\vsp}{\vspace{.5em}}
\def\C{\boldsymbol{C}}
\def\P{\mathcal{P}}
\def\I{\mathcal{I}}
\newcommand{\Mod}{\mathit{Mod}}
\newcommand{\Sen}{\mathit{Sen}}
\newcommand{\Sign}{\mathit{Sign}}
\newcommand{\Th}{\mathit{Th}}
\def\Set{\boldsymbol{SET}}
\def\Setp{\boldsymbol{P\!f\!n}}
\def\CAT{\boldsymbol{C\!AT}}
\def\3/2{\frac{3}{2}}
\newcommand{\dom}[1]{\Box{#1}}
\newcommand{\cod}[1]{{#1}\Box}
\def\DOM{\mathrm{dom}}
\def\PL{\mathcal{P\!\!L}}
\def\MSA{\mathcal{M\!S\!A}}
\DeclareSymbolFont{ams}{U}{msa}{m}{n}
\DeclareSymbolFontAlphabet{\mathams}{ams}
\begin{document}

%% title, author, affiliation
%\usepackage[affil-it]{authblk}

\title{Generic partiality for $\frac{3}{2}$-institutions}

\author{R\u{a}zvan Diaconescu}
\ead{Razvan.Diaconescu@imar.ro}
\address{Simion Stoilow Institute of Mathematics of the Romanian Academy}
\date{\today}
%\journal{---} 

\begin{abstract}
\noindent
$\3/2$-institutions have been introduced as an extension of
institution theory that accommodates implicitly partiality of the
signature morphisms together with its syntactic and semantic effects. 
In this paper we show that ordinary institutions that are equipped
with an inclusion system for their categories of signatures generate
naturally $\3/2$-institutions with \emph{explicit} partiality for
their signature morphisms.
This provides a general uniform way to build $\3/2$-institutions for
the foundations of conceptual blending and software evolution. 
Moreover our general construction allows for an uniform derivation of
some useful technical properties.
\end{abstract}

\maketitle
%%%%%%%%%%%%%%%%%%%%%%%%%%%%%%%%%%%%%%%%%%%%%%%%%%%%%%%%%%%%%%%%%%%%%%%%%%%%%%
\section{Introduction}

\subsection{Institution theory}

The broad mathematical context of our work is the theory of institutions
\cite{Goguen-Burstall:Institutions-1992} which is a three-decades-old
category-theoretic abstract model theory that traditionally has been
playing a crucial foundational role in formal
specification(e.g. \cite{sannella-tarlecki-book}).   
It has been introduced in
\cite{Goguen-Burstall:Introducing-institutions-1983} as an answer to
the explosion in the number of population of logical systems there, as 
a very general mathematical study of formal logical systems, with
emphasis on semantics (model theory), that is not committed to any
particular logical system.
Its role has gradually expanded to other areas of logic-based computer
science, most notably to declarative programming and ontologies. In
parallel, and often in interdependence to its role in computer
science, in the past fifteen years it has made important contributions
to model theory through the new area called
\emph{institution-independent model theory} \cite{iimt} -- an abstract
approach to model theory that is liberated from any commitment to
particular logical systems.  
Institutions thus allowed for a smooth, systematic, and uniform
development of model theories for unconventional logical systems, as
well as of logic-by-translation techniques and of heterogeneous
multi-logic frameworks.  

Mathematically, institution theory is based upon a category-theoretic
\cite{MacLane:Categories-for-the-Working-Mathematician-1998}
formalization of the concept of logical system that includes the
syntax, the semantics, and the satisfaction relation between them. 
As a form of abstract model theory, it is the only one that
treats all these components of a logical system fully abstractly. 
In a nutshell, the above-mentioned formalization is a
category-theoretic structure \((\Sign, \Sen, \Mod, \models)\), called
\emph{institution}, that consists of 
\begin{enumerate*}[label=(\textit{\alph*})]
  
\item a category \(\Sign\) of so-called \emph{signatures},
  
\item two functors, \(\Sen \colon \Sign \to \Set\) for the syntax,
  given by sets of so-called \emph{sentences}, and \(\Mod \colon
  \Sign^{\varominus} \to \CAT\) for the semantics, given by categories of
  so-called \emph{models}, and 
  
\item for each signature \(\Sigma\), a binary \emph{satisfaction
    relation} \(\models_{\Sigma}\) between the \(\Sigma\)-models,
  i.e.\ objects of \(\Mod(\Sigma)\), and the \(\Sigma\)-sentences,
  i.e.\ elements of \(\Sen(\Sigma)\), 

\end{enumerate*}
such that for each morphism \(\varphi \colon \Sigma \to \Sigma'\) in
the category \(\Sign\), each \(\Sigma'\)-model \(M'\), and each
\(\Sigma\)-sentence \(\rho\) the following \emph{Satisfaction
  Condition} holds: 
\[
  M' \models_{\Sigma'} \Sen(\varphi)(\rho)
  \qquad \text{if and only if} \qquad
  \Mod(\varphi)(M') \models_{\Sigma} \rho.
\]

\vspace{-\parskip}
Because of its very high level of abstraction, this definition
accommodates not only well established logical systems but also very
unconventional ones. Moreover, it has served and it may serve as a
template for defining new ones. 
Institution theory approaches logic and model theory from a
relativistic, non-substantialist perspective, quite different from the 
common reading of formal logic. 
This does not mean that institution theory is opposed to the
established logic tradition, since it rather includes it from a higher
abstraction level. 
In fact, the real difference may occur at the level of the development
methodology: top-down in the case of institution theory, versus
bottom-up in the case of traditional logic. 
Consequently, in institution theory, concepts come naturally as
presumed features that a logical system might exhibit or not, and are
defined at the most appropriate level of abstraction; in developing
results, hypotheses are kept as general as possible and introduced on
a by-need basis. 

\subsection{$\3/2$-institutions}

In spite of the broad conceptual coverage provided by institution
theory there are specific aspects that require a general treatment but
that cannot be addressed by the ordinary concept of institution. 
This situation has lead to a number of extensions of standard
institution theory, such as towards many-valued truth
(\emph{$\mathcal{L}$-institutions} \cite{GradedConseq}), implicit Kripke
semantics (\emph{stratified institutions} \cite{strat,KripkeStrat}), etc.  
The most recent such extension are the \emph{$\3/2$-institutions} of
\cite{3/2Inst} that accomodate \emph{implicitly} partiality at the
level of the signature morphisms. 
Signature morphisms that are partial are very difficult to digest from
the perspective of logic for several reasons.
Firstly, conventional mathematical logic does not usually involve
signature morphisms at all, only very rarely in the form extensions
of languages (the term ``language'' often used in conventional 
mathematical logic corresponds to ``signature'' in our terminology). 
It was precisely specification theory that showed the need to consider
signature morphisms that are not necessarily inclusions or even
injective. 
Secondly, even within the context of specification theory the idea
that translating or mapping between signatures can be a partial has
hardly been considered at all.
A very notable exception to this is Goguen's research on algebraic
semiotics \cite{Goguen:Algebraic-Semiotics-1999} and conceptual
blending \cite{Goguen:What-Is-a-Concept-2005}. 
This work, that constitutes a mathematical and computational response
to the seminal proposal of Fauconnier and Turner
\cite{Fauconnier-Turner:Conceptual-Integration-Networks-1998} of
conceptual blending as a fundamental cognitive operation of language
and common-sense, has received much attention within the context of
the recent COINVENT project
\cite{Schorlemmer-Smaill-Kuhnberger-Kutz-Colton-Cambouropoulos-Pease:COINVENT-2014}.  
A serious shortcoming of the Goguen-COINVENT approach to conceptual
blending is a lack of an explicit semantic component, and the concept
of  $\3/2$-institutions have been proposed precisely as a remedy to
this. 
Moreover in \cite{3/2Inst} it is argued that partiality of signature
morphisms occurs naturally in the mathematical studies of merging of
software changes; this can be considered as another application area
for $\3/2$-institutions.  

While $\3/2$-institutions propose an \emph{implicit} approach to
partiality of signature morphisms and of their effects on the syntax
and of the semantics, many of the examples of $\3/2$-institutions in
\cite{3/2Inst} display a common pattern in the way they are derived
from ordinary institutions on the basis of explicit partiality of
signature morphisms. 
Here we explain this pattern by developing a generic method to
construct $\3/2$-institutions from ordinary institutions that in
essence requires only that the category of the signatures of the
institution is equipped with an \emph{inclusion system}
\cite{modalg,iimt}. 
Furthermore we exploit this general construction developing general
but crucial results on the existence of lax cocones and on model
amalgamation properties in $\3/2$-institutions, obtained on the basis
of corresponding properties of the underlying ordinary institutions.  

\subsection{Contributions and Structure of the Paper}

The paper is structured as follows:
\begin{enumerate}

\item In a preliminary section we review the theory of
  $\3/2$-institution introduced in \cite{3/2Inst}. 
 
\item A crucial section is dedicated to the development of categories of
  `partial maps' based upon inclusion systems \cite{modalg,iimt}. 
   This topic is well understood in the category theoretic literature,
   however the novelty here is that we do this on the basis on
   inclusion systems rather than factorisation systems (like in the
   traditional approach), the advantage being that we are able to
   avoid the quotienting implied in the traditional approach. 
   In this way we get a general concept of partial signature morphism
   that is simpler and relates more directly to the concrete
   examples. 
   From this several technical benefits follow in the subsequent
   developments. 
   In the same section we also study some general properties of
   partial maps, that are relevant for our aims, such as the
   inheritance of an inclusion system and pushouts from the original
   category.  

\item We extend the inclusion system based construction of partial
  signatures morphisms to the other components of the concept of
  institution, namely the sentence and the model functors. 
  The main result of this section is that the whole construction gets
  a $\3/2$-institution. 

\item In the final section we prove some properties of the generic
  $\3/2$-institutions thus constructed that are relevant in the
  conceptual blending applications, the most important result being
  the existence of lax cocones with model amalgamation. 

\end{enumerate}

  %%%%%%%%%%%%%%%%%%%%%%%%%%%%%%%%%%%%%%%%%%%%%%%%%%%%%%%%%%%%%%%%%%%%%%%%%%%%%%
\section{A review of $\3/2$-institutions}

\subsection{Categories, monads}

In general we stick to the established category theoretic terminology
and notations, such as in 
\cite{MacLane:Categories-for-the-Working-Mathematician-1998}.
But unlike there we prefer to use the diagrammatic notation for
compositions of arrows in categories, i.e. if $f \co A \ra B$ and
$g\co B \ra C$ are arrows then $f;g$ denotes their composition. 
The domain of an arrow/morphism $f$ is denoted by $\dom{f}$ while its
codomain is denoted by $\cod{f}$. 
$\Set$ denotes the category of sets and functions and $\CAT$ the
``quasi-category'' of categories and functors.\footnote{This means it
  is bigger than a category since the hom-sets are classes rather than
  sets.}  

The \emph{dual} of a category $\C$ (obtained by formally reversing its
arrows) is denoted by $\C^\varominus$. 

Given a category $\C$, a triple $(\Delta,\delta,\mu)$ constitutes a
\emph{monad} in $\C$ when $\Delta \co \C \to \C$, and $\delta$ and
$\mu$ are natural transformations $\Delta^2 \Ra \Delta$ and $1_{\C}
\Ra \Delta$, respectively such that following diagrams commute: 
$$\xy
\xymatrix{
\Delta(\Sigma) \ar[r]^{\delta_{\Delta(\Sigma)}}
\ar[dr]_{1_{\Delta(\Sigma)}} & \Delta^2 (\Sigma) \ar[d]^{\mu_\Sigma} &
\ar[l]_{\Delta(\delta_\Sigma)} \ar[dl]^{1_{\Delta(\Sigma)}} 
\Delta(\Sigma) & \Delta^3(\Sigma) \ar[r]^{\mu_{\Delta(\Sigma)}}
  \ar[d]_{\Delta(\mu_\Sigma)} & \Delta^2 (\Sigma) \ar[d]^{\mu_\Sigma} \\  
 & \Delta(\Sigma) & & \Delta^2 (\Sigma) \ar[r]_{\mu_\Sigma} &
 \Delta(\Sigma) 
}
\endxy$$
The \emph{Kleisli category} $\C_\Delta$ of the monad
$(\Delta,\delta,\mu)$ has the objects of $\C$ but an arrow
$\theta_\Delta \co A \to B$
in $\C_\Delta$ is an arrow $\theta \co A \to \Delta(B)$ in $\C$.  
The composition in $\C_\Delta$ is defined as shown below: 
$$\xy
\xymatrix{
A \ar[d]_{\theta_\Delta} & A \ar[d]^\theta & \\
B \ar[d]_{\theta'_\Delta} & \Delta(B) \ar[d]^{\Delta(\theta')} & \\
C & \Delta^2 (C) \ar[r]_{\mu_{C}} & \Delta(C)
}
\endxy$$

The following functor \cite{3/2Inst} extends the well known power-set
functor from sets to categories.  
%\begin{definition}\label{power-cat-dfn}
The \emph{power-set functor on categories} $\P \co \CAT \ra \CAT$ is
defined as follows:  
\begin{itemize}%\smallitems

\item for any category $\C$,
\begin{itemize}

\item $|\P\C| = \{ A \mid A \subseteq |C| \}$ and 
$\P \C(A,B) = 
\{ H \subseteq \C \mid \dom{h} \in A, \cod{h} \in B \text{ for each }
h \in H \}$; and

\item composition is defined by 
$H_1 ; H_2 = \{ h_1 ; h_2 \mid h_1 \in H_1, h_2 \in H_2, \cod{h_1} =
\dom{h_2}  \}$;  then $1_A = \{ 1_a \mid a\in A \}$ are the identities.

\end{itemize}

\item for any functor $F \co \C \ra \C'$,  
$\P F(A) = F(A) \subseteq |\C'|$ and $\P F(H) = F(H) \subseteq \C'$.

\end{itemize}
Moroever, like in the case of sets, this construction extends to a
\emph{monad $(\P,\{\_\},\cup)$} in $\CAT$. 
Then $\CAT_{\!\!\P}$ denotes its associated Keisli category. 
%\end{definition}

\subsection{Partial functions}

A \emph{partial function} $f \co A \pto B$ is a binary relation 
$f \subseteq A \times B$ such that $(a,b), (a,b') \in f$ implies 
$b = b'$. 
The \emph{definition domain} of $f$, denoted $\DOM(f)$ is the set 
$\{ a \in A \mid \exists b \ (a,b)\in f \}$. 
A partial function $f\co A \pto B$ is called \emph{total} when
$\DOM(f) = A$.  
We denote by $f^0$ the restriction of $f$ to $\DOM(f) \times B$; this
is a total function. 
Partial functions yield a subcategory of the category of binary
relations, denoted $\Setp$. 
% If $A'\subseteq A$ by $f(A')$ we denote the set 
% $\{ b \mid \exists a\in A', (a,b)\in f \}$.
% It is easy to check the following (though not as immediate as in the
% case of the total functions): 

% \begin{lemma}\label{pfun-lem}
% Given partial functions $f \co A \pto B$ and $g \co B \pto C$ and 
% $A' \subseteq A$ we have that $(f;g)(A') = g(f(A'))$. 
% \end{lemma}

\subsection{$\3/2$-categories}

A \emph{$\3/2$-category} is just a category such that its
hom-sets are partial orders, and the composition preserve these
partial orders.  
In the literature $\3/2$-categories are also called \emph{ordered
  categories} or \emph{locally ordered categories}. 
In terms of enriched category theory \cite{kelly-cat}, $\3/2$-category
are just categories enriched by the monoidal category of partially
ordered sets.
 
Given a $\3/2$-category $\C$ by $\C^\varobar$ we denote its `vertical'
dual which reverses the partial orders, and by $\C^\varoplus$ its
double dual $\C^{\varominus\varobar}$. 
Given $\3/2$-categories $\C$ and $\C'$, a \emph{strict $\3/2$-functor} 
$F \co \C \ra \C'$ is a functor $\C \ra \C'$ that preserves the
partial orders on the hom-sets.
\emph{Lax functors} relax the functoriality conditions  
$F(h);F(h') = F(h;h')$ to  $F(h);F(h') \leq F(h;h')$ 
(when $\cod{h} = \dom{h'}$) and $F(1_A) = 1_{F(A)}$  to $1_{F(A)} \leq
F(1_A)$.  
If these inequalities are reversed then $F$ is an 
\emph{oplax functor}. 
This terminology complies to \cite{borceux94} and to more recent
literature, but in earlier literature \cite{kelly-street74,jay91} this
is reversed.   
Note that oplax + lax = strict. 
In what follows whenever we say ``$\3/2$-functor'' without the
qualification ``lax'' or ``oplax'' we mean a functor which is either
lax or oplax.  

Lax functors can be composed like ordinary functors; we denote by
$\3/2 \CAT$ the category of $\3/2$-categories and lax functors. 
  
Most typical examples of a $\3/2$-category are $\Setp$ -- the category
of partial functions in which the ordering between partial functions
$A \pto B$ is given by the inclusion relation on the binary relations
$A \ra B$, and $\mathbf{P\!oS\!et}$ -- the category partial ordered sets (with
monotonic mappings as arrows) with orderings between monotonic
functions beign defined point-wise ($f \leq g$ if and only if $f(p)
\leq g(p)$ for all $p$).   

%\begin{definition}
Let us consider the power-set monad on categories as defined above.
%Dfn.~\ref{power-cat-dfn}. 
Given the partial order on each $\P \C$
given by category inclusions, the Kleisli category $\CAT_{\!\!\P}$ admits a
two-fold refinement to a $\3/2$-category:
\begin{enumerate}

\item morphisms $\C \ra \P \C'$ are allowed to be lax functors rather
  than (strict) functors, and  

\item we consider the point-wise partial order on the class of the lax
  functors  $\C \ra \P \C'$ that is induced by the partial order on
  $\P \C'$.  

\end{enumerate}
Let us denote the $\3/2$-category thus obtained by $\3/2 (\CAT_{\!\!\P})$. 
%\end{definition}

Unlike in the case of ordinary categories, colimits in
$\3/2$-categories come in several different flavours according to the
role played by the order on the arrows. 
Here we recall some of these for the particular emblematic case of
pushouts; the extension to other types of colimits being obvious.  

Given a span $\varphi_1,\varphi_2$ of arrows in a $\3/2$-category, a
\emph{lax cocone} for the span consists of arrows
$\theta_0,\theta_1,\theta_2$ such that there are inequalities as shown in
the following diagram:
\begin{equation}\label{lax-cocone-equation}
\xymatrix @C-1.5em{
 & & \bullet & & \\
\bullet \ar@{.>}[urr]^{\theta_1} & { \ \ \ \ \leq} & &
{ \geq \ \ \ \ } &
\bullet \ar@{.>}[ull]_{\theta_2}\\
 &  & \bullet \ar[ull]^{\varphi_1} \ar[urr]_{\varphi_2} \ar@{.>}[uu]_{\theta_0}& &
}
\end{equation}
When the two inequalities are both equalities, this is a \emph{strict}
cocone. 
In this case $\theta_0$ is redundant and the data collapses to the
equality $\varphi_1 ; \theta_1 = \varphi_2 ; \theta_2$. 

A lax cocone like in diagram \eqref{lax-cocone-equation} is:
\begin{itemize}

\item \emph{pushout} when it is strict and for any strict cocone
  $\theta'_1, \theta'_2$ there exists and unique arrow $\mu$ that is
  mediating, i.e. $\theta_k ; \mu = \theta'_k$, $k = 1,2$; 

\item \emph{lax pushout} when for any lax cocone   
$\theta'_0, \theta'_1, \theta'_2$ there exists an unique mediating
arrow $\mu$, i.e. $\theta_k ; \mu = \theta'_k$, $k = 0,1,2$;

\item \emph{weak (lax) pushout} when the uniqueness condition on the
  mediating arrow is dropped from the above properties; 

\item \emph{near pushout} when for any lax cocone $\theta'_0,
  \theta'_1, \theta'_2$ the set of mediating arrows $\{ \mu \mid
  \theta_k ; \mu \leq \theta'_k, k = 0,1,2 \}$ has a maximal element. 

\end{itemize}
Pushouts are not a proper $\3/2$-categorical concept because they do
not involve in any way the orders on the arrows. \vsp

Lax pushouts represents the instance of a natural concept of colimit
from general enriched category theory \cite{kelly-cat} to
$\3/2$-categories; however in concrete situations, unlike their
cousins from ordinary category theory, they can be very
difficult to grasp and sometimes appearing quite inadequate.
For example in $\Setp$, if 
$\DOM \varphi_1 \cap \DOM \varphi_2 \not= \emptyset$ then the span
$(\varphi_1,\varphi_2)$ does not have a lax pushout. 
This is caused by the discrepancy between a lot of laxity at the
level of diagrams and of the arrows on the one hand (allowing for
unbalanced cocones in which low components may coexist with high
components), and the strictness required in the universal property on
the other hand.  
A remedy for this, that was proposed in \cite{3/2Inst}, is to
restrict the cocones to designated subclasses of arrows as follows.

%\begin{definition}[$\T$-colimits]
Given a (1-)subcategory $\T \subseteq \C$ of a $\3/2$-category $\C$, a
\emph{lax $\T$-cocone} for a span $(\varphi_1,\varphi_2)$ is a lax
cocone $(\theta_0,\theta_1,\theta_2)$ for the span such that
$\theta_k\in \T$, $k=0,1,2$.
A \emph{lax $\T$-pushout} is a minimal lax $\T$-cocone, i.e. for any
lax $\T$-cocone  $(\theta'_0,\theta'_1,\theta'_2)$ there exists an unique
mediating arrow $\mu\in \T$ such that $\theta_k ; \mu = \theta'_k$,
$k=0,1,2$. 

This definition extends in the obvious way to general colimits and to
the weak case (by dropping off the requirement on the uniqueness of
$\mu$).  
%\end{definition}

For example, in $\Setp$ by letting $\T$ \ be the class of total
functions, \emph{any} span of partial functions admits a lax
$\T$-pushout.  \vsp

% Near pushouts (terminology from \cite{jay91}) are much easier to grasp
% than lax pushouts (for example in $\Setp$ they are the epimorphic
% cocones) but nevertheless they have received only little consideration
% due to  their pathology of lacking uniqueness, a property that is
% considered crucial for any kind of colimits.   
% However in \cite{jay91} it is argued that they constitute a more
% proper concept of colimit in a ordered categorical context because it
% involves only inequalities and moreover Goguen argues
% \cite{Goguen:Algebraic-Semiotics-1999} that their lack
% of the uniqueness property is exactly what makes them useful for
% modelling conceptual blending; there he calls them  
% \emph{$\3/2$-pushouts}.  

\subsection{Institutions}

An  \emph{institution} $\I = 
(\Sign^{\I}, \Sen^{\I}, \Mod^{\I}, \models^{\I})$ consists of 
\begin{itemize}%\smallitems

\item a category $\Sign^{\I}$ whose objects are called
  \emph{signatures},

\item a sentence functor $\Sen^{\I} \co \Sign^{\I} \ra \Set$
  defining for each signature a set whose elements are called
  \emph{sentences} over that signature and defining for each signature
  morphism a \emph{sentence translation} function, 

\item a model functor $\Mod^{\I} \co (\Sign^{\I})^{\varominus} \ra \CAT$
  defining for each signature $\Sigma$ the category
  $\Mod^{\I}(\Sigma)$ of \emph{$\Sigma$-models} and $\Sigma$-model
  homomorphisms, and for each signature morphism $\varphi$ the
  \emph{reduct} functor $\Mod^{\I}(\varphi)$,  

\item for every signature $\Sigma$, a binary 
  \emph{$\Sigma$-satisfaction relation}
  $\models^{\I}_{\Sigma} \subseteq |\Mod^{\I} (\Sigma)|
  \times \Sen^{\I} (\Sigma)$, 

\end{itemize}
such that for each morphism 
$\varphi$,  the \emph{Satisfaction Condition}
\begin{equation}
M'\models^{\I}_{\Sigma'} \Sen^{\I}(\varphi)\rho \text{ if and only if  }
\Mod^{\I}(\varphi)M' \models^{\I}_\Sigma \rho
\end{equation}
holds for each $M'\in |\Mod^{\I} (\cod{\varphi})|$ and $\rho \in
\Sen^{\I} (\dom{\varphi})$. 
\[
\xymatrix{
    \dom{\varphi} \ar[d]_{\varphi} & |\Mod^{\I}(\dom{\varphi})|
    \ar@{-}[r]^-{\models^{\I}_{\dom{\varphi}}} & 
    \Sen^{\I}(\dom{\varphi}) \ar[d]^{\Sen^{\I}(\varphi)} \\
    \cod{\varphi} & | \Mod^{\I}(\cod{\varphi})| \ar[u]^{\Mod^{\I}(\varphi)} 
    \ar@{-}[r]_-{\models^{\I}_{\cod{\varphi}}} & \Sen^{\I}(\cod{\varphi})
  }
\] 
We may omit the superscripts or subscripts from the notations of the
components of institutions when there is no risk of ambiguity. 
For example, if the considered institution and signature are clear,
we may denote $\models^{\I}_\Sigma$ just by $\models$. 
For $M = \Mod(\varphi)M'$, we say that $M$ is the
\emph{$\varphi$-reduct} of $M'$.%  and that $M'$ is a
% \emph{$\varphi$-expansion} of $M$. 

\begin{example}[Propositional logic -- $\PL$]
\begin{rm}
This is defined as follows.
$\Sign^{\PL} = \Set$, and for any set $P$, $\Sen(P)$ is generated by
the grammar 
\[
S ::= P \mid S \wedge S \mid \neg S 
\]
and $\Mod^{\PL}(P) = (2^P,\subseteq)$.
For any $M \in |\Mod^{\PL} (P)|$, depending on convenience, we may
consider it either as a subset $M\subseteq P$ or equivalently as a
function $M \co P \ra 2 = \{ 0, 1 \}$.

For any function $\varphi \co P \ra P'$, $\Sen^{\PL}(\varphi)$
replaces the each element $p\in P$ that occurs in a sentence $\rho$ by 
$\varphi(p)$, and $\Mod^{\PL}(\varphi)(M') = \varphi;M$ for each
$M'\in 2^{P'}$. 
For any $P$-model $M \subseteq P$ and $\rho\in \Sen^{\PL}(P)$,
$M\models\rho$ is defined by induction on the structure of $\rho$ by
$(M \models p) = (p\in M)$, 
$(M \models \rho_1 \wedge \rho_2) = 
(M \models \rho_1) \wedge (M \models \rho_2)$ and 
$(M \models \neg\rho) = \neg(M \models \rho)$.  
\end{rm}
\end{example}

\begin{example}[Many-sorted algebra -- $\MSA$]
\begin{rm}
  The \emph{$\MSA$-signatures} are pairs $(S,F)$ consisting of a set $S$ of sort
  symbols and of a family $F = \{ F_{w\ra s} \mid w\in S^*, s\in S \}$ of
  sets of function symbols indexed by arities (for the arguments) and sorts (for
  the results).\footnote{By $S^*$ we denote the set of strings of sort symbols.}
  \emph{Signature morphisms} $\varphi \co (S,F) \ra (S',F')$ consist of a
  function $\varphi^\st \co S \ra S'$ and a family of functions 
  $\varphi^\op = 
  \{ \varphi^\op_{w\ra s} \co F_{w\ra s} \ra F'_{\varphi^\st (w) \ra
    \varphi^\st (s)} \mid w\in S^*, s\in S \}$.

  The \emph{$(S,F)$-models} $M$, called algebras, interpret each sort symbol $s$
  as a set $M_s$ and each function symbol $\sigma\in F_{w\ra s}$ as a function
  $M_\sigma$ from the product $M_w$ of the interpretations of the argument sorts
  to the interpretation $M_s$ of the result sort.
  % In order to avoid the existence of empty interpretations of the sorts,
  % which may complicate unnecessarily our presentation, we assume that each
  % signature has at least one \emph{constant} (i.e.\ function symbol with
  % empty arity) for each sort.
  An \emph{$(S,F)$-model homomorphism} $h \co M \ra M'$ is an indexed family of
  functions $\{ h_s \co M_s \ra M'_s \mid s\in S \}$ such that 
  $h_s (M_\sigma (m)) = M'_\sigma (h_w (m))$ for each $\sigma \in F_{w \ra s}$
  and each $m\in M_w$, where $h_w \co M_w \ra M'_w$ is the canonical
  componentwise extension of $h$, i.e.\ 
  $h_w (m_1, \dots, m_n) = (h_{s_1}(m_1), \dots, h_{s_n}(m_n))$ for 
  $w = s_1 \dots s_n$ and $m_i \in M_{s_i}$.
  % A model homomorphism is \emph{closed} when $M_\pi = h^{-1}_w (M'_\pi)$
  % for each relation symbol $\pi\in P_w$.

  For each signature morphism $\varphi \co (S, F) \ra (S', F')$, the
  \emph{reduct} $\Mod(\varphi)(M')$ of an $(S', F')$-model $M'$ is defined by
  $\Mod(\varphi)(M')_x = M'_{\varphi(x)}$ for each sort or function symbol $x$
  from the domain signature of $\varphi$.

  For each signature $(S, F)$, $T_{(S,F)} = ((T_{(S,F)})_s )_{s \in S}$
  is the least family of sets such that $\sigma(t) \in (T_{(S,F)})_s$ for all
  $\sigma \in F_{w \ra s}$ and all tuples $t \in (T_{(S,F)})_{w}$.  The elements
  of $(T_{(S,F)})_s$ are called \emph{$(S,F)$-terms of sort $s$}.  For each
  $(S,F)$-algebra $M$, the \emph{evaluation of an $(S,F)$-term $\sigma(t)$ in
    $M$}, denoted $M_{\sigma(t)}$, is defined as $M_\sigma (M_t)$, where $M_t$
  is the componentwise evaluation of the tuple of $(S,F)$-terms $t$ in $M$.

  \emph{Sentences} are the usual first order sentences built from equational
  atoms $t=t'$, with $t$ and $t'$ (well-formed) terms of the same sort, by
  iterative application of Boolean connectives ($\wedge$, $\impl$, $\neg$,
  $\vee$) and quantifiers ($\forall X$, $\exists X$ -- where $X$ is a sorted
  set of variables).
  Sentence translations along signature morphisms just rename the sort and
  function symbols according to the respective signature morphisms.
  They can be formally defined by recursion on the structure of the sentences.
  % While the induction step is straightforward for the case of the Boolean 
  % connectives it needs a bit of attention for the case of the quantifiers. 
  % For any signature morphism $\varphi \co (S,F,P) \ra (S',F',P')$, 
  % $$
  % \Sen^\mathFOL (\varphi)((\forall X)\rho) = (\forall X^\varphi)
  % \Sen^\mathFOL (\varphi')(\rho)
  % $$
  % for each finite set $X$ of \emph{variables for $(S,F,P)$}.
  % The variables need to be disjoint from the constants of the signature, 
  % also we have to ensure that $\Sen^\mathFOL$ thus defined is 
  % functorial indeed and that there is no overloading of variables 
  % (which in certain situations would cause a failure of the 
  % Satisfaction Condition).
  % These may be formally achieved by considering that a variable 
  % for $(S,F,P)$ is a triple of the form $(x,s,(S,F,P))$ where
  % $x$ is the \emph{name of the variable} and $s\in S$ is the 
  % \emph{sort of the variable} and that two different variables in 
  % $X$ have different names. 
  % Then  we let $(S,F+X,P)$ be the extension of $(S,F,P)$ such that 
  % $(F+X)_{w\ra s}=F_{w\ra s}$ when $w$ is non-empty and 
  % $(F+X)_{\ra s}=F_{\ra s} \cup \{ (x,s,(S,F,P)) \mid (x,s,(S,F,P)) \in X\}$
  % and we let $\varphi' \co (S,F+X,P) \ra (S',F'+X^\varphi,P')$ be
  % the canonical extension of $\varphi$ that maps each variable 
  % $(x,s,(S,F,P))$ to $(x,\varphi(s),(S',F',P'))$.
  The satisfaction of sentences by models is the usual Tarskian satisfaction
  defined recursively on the structure of the sentences.  (As a special note for
  the satisfaction of the quantified sentences, defined in this formalisation by
  means of model reducts, we recall that
  $M \models_\Sigma (\forall X)\rho$ if and only if $M' \models_{\Sigma+X} \rho$
  for each expansion $M'$ of $M$ to the signature $\Sigma+X$ that adds the
  variables $X$ as new constants to $\Sigma$.)
\end{rm}
\end{example}

Theories and theory morphisms are one of the crucial concepts
in institution theory and its applications to formal specification.
Traditionally theories model logic-based formal specifications, while
theory morphisms model relations between specification modules, such
as imports, renaming, views, etc. (see
\cite{iimt,sannella-tarlecki-book}). 
In any institution, a \emph{theory} is a pair $(\Sigma,E)$ consisting
of a signature $\Sigma$ and a set $E$ of $\Sigma$-sentences.  
A \emph{theory morphism} $\varphi \co (\Sigma,E) \ra (\Sigma',E')$ 
is a signature morphism $\varphi \co \Sigma \ra \Sigma'$
such that $E' \models \Sen(\varphi)E$.
It is easy to check that the theory morphisms are closed under the
composition given by the composition of the signature morphisms; this
gives the \emph{category of the theories of $\I$} denoted $\Th^\I$. 

Given a theory $(\Sigma,E)$ its \emph{closure} is $(\Sigma,E^\bullet)$
where $E^\bullet = \{ e\in \Sen(\Sigma) \mid E  \models_\Sigma e \}$. 

\subsection{$\3/2$-institutions}

%\begin{definition}[$\3/2$-institution]\label{ins-dfn}
According to \cite{3/2Inst}, a \emph{$\3/2$-institution} 
$\I = ( \Sign^{\I}, \Sen^{\I}, \Mod^{\I}, (\models^{\I}_\Sigma)_{\Sigma \in
  |\Sign^{\I}|} )$ consists of 
\begin{itemize}%\smallitems

\item a $\3/2$-category of signatures $\Sign^{\I}$, 

\item an $\3/2$-functor $\Sen^{\I} \co \Sign^{\I} \ra
  \Setp$, called the \emph{sentence functor}, 

\item an lax $\3/2$-functor  
$\Mod^{\I}\co(\Sign^{\I})^{\varoplus}\rightarrow \3/2 (\CAT_{\!\!\P})$, called the
\emph{model functor}, 

\item for each signature $\Sigma\in |\Sign^\I|$ a 
\emph{satisfaction relation} 
$\models_\Sigma^{\I} \ \subseteq \ |\Mod^{\I}(\Sigma)|\times \Sen^{\I}(\Sigma)$ 

\end{itemize}
such that for each morphism 
$\varphi \in \Sign^{\I}$, 
the \emph{Satisfaction Condition}
\begin{equation}\label{sat-cond-eq}
M'\models^{\I}_{\cod{\varphi}} \Sen^{\I}(\varphi)\rho
\ \ \text{ if and only if } \ \
M \models^{\I}_{\dom{\varphi}} \rho 
\end{equation}
holds for each $M'\in |\Mod^{\I} (\cod{\varphi})|$, 
$M \in |\Mod^{\I}(\varphi)M'|$ and 
$\rho \in \DOM (\Sen^{\I} (\varphi))$.

% $\I$ is called a \emph{lax} or an \emph{oplax} $\3/2$-institution,
% respectively, when $\Sen$ is lax or oplax, respectively. 
%\end{definition}

The difference between $\3/2$-institutions and ordinary institutions
(also called \emph{1-institutions}) is determined by the 
$\3/2$-categorical structure of the signature morphisms which
propagates to the sentence and to the model functors.   
Consequently the Satisfaction Condition \eqref{sat-cond-eq} takes an
appropriate format. 
Thus, for each signature morphism $\varphi$
its corresponding sentence translation $\Sen(\varphi)$ is a partial
function $\Sen(\dom{\varphi}) \pto \Sen(\cod{\varphi})$ and moreover
whenever $\varphi \leq \theta$ we have that 
$\Sen(\varphi) \subseteq \Sen(\theta)$. 
The sentence functor $\Sen$ can be either lax or oplax; depending on
how is this we may call the respective $\3/2$-institution as
\emph{lax} or \emph{oplax $\3/2$-institution}. 
In many concrete situations it happens that $\Sen$ is strict while
some general results require it to be either lax or oplax or both. 
% If the \emph{lax} functor conditions on $\Sen$ are satisfied:
% \[
% \Sen(\varphi);\Sen(\varphi') \subseteq \Sen(\varphi;\varphi') 
% \text{ \ and \ } 
% 1_{\Sen(\Sigma)} = \Sen(1_\Sigma) 
% \]
% (Technically speaking the latter condition should be 
% $1_{\Sen(\Sigma)} \subseteq \Sen(1_\Sigma)$ but since
% $1_{\Sen(\Sigma)}$ is total we have an equality.)

The model reduct $\Mod(\varphi)$ is an \emph{lax} functor
$\Mod(\cod{\varphi}) \ra \P \Mod(\dom{\varphi})$ meaning that for each
$\Sigma'$-model we have a \emph{set of reducts} rather than a single 
reduct. 
In concrete examples this is a direct consequence of the partiality
of $\varphi$: in the reducts the interpretation of the symbols on
which $\varphi$ is not defined is unconstrained, therefore there may
be many possibilities for their interpretations. 
``Many'' here includes also the case when there is no interpretation.

% \begin{definition}
% The model functor $\Mod$ \emph{admits emptiness} when there exists a
% signature morphism $\varphi$ and a $\cod{\varphi}$-model $M'$ such
% that $\Mod(\varphi) = \emptyset$, otherwise it is said that $\Mod$
% does not admit emptiness. 
% \end{definition}

% In examples most often the model functors $\Mod$ do not admit
% emptiness, however the general definition does not rule out emptiness
% and moreover there are significant examples (we will see in
% Sect.~\ref{th-morphism-sec}) when emptiness of $\Mod$ may happen. 

\begin{itemize}

\item[--] 
The fact that $\Mod$ is a $\3/2$-functor implies also that whenever
$\varphi \leq \theta$ we have $\Mod(\theta) \leq \Mod(\varphi)$,
i.e. $\Mod(\theta)M' \subseteq \Mod(\varphi)M'$, etc. 

\item[--]
The lax aspect of $\Mod$ means that for signature morphisms
$\varphi$ and $\varphi'$ such that $\cod{\varphi} = \dom{\varphi'}$ and
for any  $\cod{\varphi'}$-model $M''$, we have that 
\[
\Mod(\varphi)(\Mod(\varphi')M'') \subseteq
\Mod(\varphi;\varphi')M''  
\]
and for each signature $\Sigma$ and for each $\Sigma$-model $M$ that 
\[
M \in \Mod(1_\Sigma)M. 
\]

\item[--]
The lax aspect of the reduct functors $\Mod(\varphi)$ means that for
model homomorphisms $h_1, h_2$ such that $\cod{h_1}=\dom{h_2}$ we have
that 
\[
\Mod(\varphi)(h_1);\Mod(\varphi)(h_2) \subseteq
\Mod(\varphi)(h_1;h_2)
\] 
and for each $M'\in \Mod(\cod{\varphi})$ and each $M\in
\Mod(\varphi)M'$ that 
\[
1_M \in \Mod(\varphi)1_{M'}.
\]
\end{itemize}
As already mentioned above model homomorphisms do not play yet any
role in conceptual blending or in other envisaged applications of
$\3/2$-institutions.
Hence the lax aspect of model functors is for the moment a purely
theoretical feature which is however supported naturally by all
examples. \vsp

In \cite{vidal-tur2010} there is a 2-categorical generalization of the
concept of institution, called \emph{2-institution}, that consider
$\Sign$ to be a 2-category, $\Sen\co \Sign \to \CAT$ and $\Mod\co
\Sign^\varominus \to \CAT$ to be pseudo-functors,
and that takes a (quite sophisticated categorically) many-valued
approach to the satisfaction relation.   
From these we can see immediately that $2$-institutions of
\cite{vidal-tur2010} do not cover the concept of $\3/2$-institution
through the perspective of $\3/2$-categories as special cases of
$2$-categories, the functors $\Sen$ and $\Mod$ in $2$-institutions
diverging from those in $\3/2$-institutions in two ways: they are
pseudo-functors (in $\3/2$-category theory this means just ordinary
functors) and their targets do not match those of
$\3/2$-institutions. 
This lack of convergence is due to the two extensions aiming to
different application domains. 

% \begin{definition}[Total signature morphisms]\label{total-dfn}
% A signature morphism $\varphi$ in a $\3/2$-institution is 
% \begin{itemize}

% \item \emph{$\Sen$-maximal} when $\Sen(\varphi)$ is total;

% \item \emph{$\Mod$-maximal} when for each $\cod{\varphi}$-model $M'$,
%   $\Mod(\varphi)M'$ is a singleton; and 

% \item \emph{total} when it is both $\Sen$-maximal and $\Mod$-maximal. 

% \end{itemize}
% \end{definition}

% \begin{corollary}\label{total-cor}
% In each $\3/2$-institution the total signature morphisms determine a
% 1-institution. 
% \end{corollary}

\subsection{$\3/2$-institutions: examples}

The examples given in this section are imported from \cite{3/2Inst}. 

The following expected example shows that the concept of
$\3/2$-institution constitute a generalisation of the concept of
institution. 

\begin{example}[Institutions]
\begin{rm}
Each 1-institution can be trivially regarded as a $\3/2$-institution
by regarding its category of signatures as a $\3/2$-category with
discrete partial orders. 
\end{rm}
\end{example}

\begin{example}[Propositional logic with partial morphisms of
  signatures -- $\3/2 \PL$]\label{3/2-pl-ex}
\begin{rm}
This example extends the ordinary institution $\PL$ to a
$\3/2$-institution by considering partial functions rather than total
functions as signature morphisms; thus $\Sign = \Setp$. 
\vsp

SENTENCES.
While for each set $P$, $\Sen(P)$ is like in $\PL$, for any partial
function $\varphi \co P \pto P'$ the sentence translation
$\Sen(\varphi)$ translates like in $\PL$ but only the sentences
containing only propositional variables $P$ that are translated
by $\varphi$, i.e. that belong to $\DOM \varphi$; hence the
partiality of $\Sen(\varphi)$.  
More precisely we have that  
$\DOM (\Sen\varphi) = \Sen^{\PL} (\DOM \ \varphi)$ and for each 
$\rho\in \DOM (\Sen\varphi)$ we have that $\Sen(\varphi)\rho =
\Sen^{\PL} (\varphi^0)\rho$ .  
The sentence functor is a \emph{strict} $\3/2$-functor

MODELS.
The $\3/2 \PL$ models and model homomorphisms are those of $\PL$,
but their reducts differ from those in $\PL$. 
Given a partial function $\varphi \co P \pto P'$ and a $P'$-model 
$M' \co P' \ra 2$, 
\[
\Mod(\varphi)M' = \{ M \co P \to 2 \mid M_p =
M'_{\varphi^0(p)} \text{ for all }p\in \DOM \ \varphi \}.
\]
% \begin{equation}\label{pl-equation}
% \xymatrix{
% \DOM\varphi \ar[r]^{\varphi^0} \ar[d]_\subseteq & P' \ar[d]^{M'} \\
% P \ar[r]_M & 2 
% }
% \end{equation}
On the model homomorphisms the reduct is defined by 
\[
\Mod(\varphi)(M' \subseteq N') = 
\{ M \subseteq N \mid M \in \Mod(\varphi)M', N \in \Mod(\varphi)N' \}.
\]

SATISFACTION. \
The satisfaction relation of $\3/2 \PL$ is inherited from $\PL$.
\end{rm}
\end{example}

\begin{example}[Many sorted algebra with partial morphisms of
  signatures -- $\3/2 \MSA$]\label{3/2-msa-ex}
\begin{rm}
In this example we extend the $\MSA$ institution to its $\3/2$ variant
in a way that parallels the extension of $\PL$ to $\3/2 \PL$. 
For this reason we will give only the definitions and rather skip the
arguments. 

Given $\MSA$ signatures, a \emph{partial $\MSA$-signatures morphism}
$\varphi \co (S,F) \pto (S',F')$ consists of 
\begin{itemize}

\item a partial function $\varphi^\st \co S \pto S'$, and 

\item for each $w \in (\DOM \varphi^\st)^*$ and $s \in \DOM
  \varphi^\st$ a partial function 
  $\varphi^\op_{w\ra s} \co F_{w\ra s} \pto F'_{\varphi^\st w\ra
    \varphi^\st s}$.

\end{itemize}
Given $\varphi \co (S,F) \pto (S',F')$ and $\varphi' \co (S',F') \pto
(S'',F'')$ their composition $\varphi;\varphi'$ is defined by 
\begin{itemize}

\item $(\varphi;\varphi')^\st = \varphi^\st ; \varphi'^\st$, and 

\item for each  $w \in (\DOM (\varphi;\varphi')^\st)^*$ and 
$s \in \DOM (\varphi;\varphi')^\st$: \  
$(\varphi;\varphi')^\op_{w\ra s} = 
\varphi^\op_{w\ra s} ; \varphi'^\op_{\varphi^\st w\ra \varphi^\st s}$. 

\end{itemize}
Given $\varphi,\theta \co (S,F) \pto (S',F')$, then $\varphi \leq
\theta$ if and only if 
\begin{itemize}

\item $\varphi^\st \subseteq \theta^\st$, and 

\item for each $w \in (\DOM \varphi^\st)^*$ and $s \in \DOM
  \varphi^\st$: \ $\varphi^\op_{w\ra s} \subseteq \theta^\op_{w\ra s}$.

\end{itemize}
Under these definitions the partial $\MSA$-signature morphisms form a
$\3/2$-category, which is the category of the $\3/2 \MSA$
signatures. 

Given a partial $\MSA$-signature morphism $\varphi$ we denote by $\DOM
\varphi$ the signature $(\DOM \varphi^\st, \DOM \varphi^\op)$ where  
$(\DOM \varphi^\op)_{w \to s} = \DOM \varphi^\op_{w\to s}$ and by
$\varphi^0 \co \DOM \varphi \to \cod{\varphi}$ the resulting (total)
$\MSA$-signature morphism.

For any signature $\Sigma$, 
$\Sen^{\3/2 \MSA}(\Sigma) = \Sen^{\MSA}(\Sigma)$ and for any partial
$\MSA$-signature morphism $\varphi$, $\Sen^{\3/2 \MSA}(\varphi)$ is
defined by 
\begin{itemize}

\item $\DOM \ \Sen^{\3/2 \MSA}(\varphi) = \Sen^\MSA (\DOM \varphi)$
and 

\item for each sentence $\rho \in \DOM \ \Sen^{\3/2 \MSA}(\varphi)$, 
$\Sen^{\3/2 \MSA}(\varphi)\rho = \Sen^\MSA (\varphi^0)\rho$. 

\end{itemize}
Like for $\3/2 \PL$ this yields also a \emph{strict} $\3/2$-functor. 
For any signature $\Sigma$, 
$\Mod^{\3/2 \MSA}(\Sigma) = \Mod^{\MSA}(\Sigma)$ and for any partial
$\MSA$-signature morphism $\varphi$, each $\cod{\varphi}$-model $M'$,
$\Mod^{\3/2 \MSA}(\varphi)M' = M$ is 
defined by 
\begin{itemize}

\item for each sort symbol $s$ in $\DOM \varphi$, $M_s = M'_{\varphi^\st s}$,
  and  

\item for each operation symbol $\sigma$ in $\DOM \varphi$, 
$M_\sigma = M'_{\varphi^\op \sigma}$.  

\end{itemize}
The definition on model homomorphisms is similar, we skip it here. 
Under these definitions, $\Mod^{\3/2 \MSA}$ is a lax functor. 

The satisfaction relation is inherited from $\MSA$, and the argument
for the Satisfaction Condition in $\3/2 \MSA$ is similar to that in
$\3/2 \PL$.  
\end{rm}
\end{example}

\begin{example}\label{3/2-submsa-ex}
\begin{rm}
The $\3/2 \MSA$ example can be twisted by considering less partiality
in the signature morphisms.
This can be done in several ways, in each case a different
$\3/2$-`sub-institution' of $\3/2 \MSA$ emerges. 
\begin{enumerate}

\item We constrain $\varphi^\st$ to be total functions.

\item We let $\varphi^\st$ to be partial functions but we constrain
  $\varphi^\op_{w\to s}$ to be total. 

\end{enumerate}
\end{rm}
\end{example}

\begin{example}\label{3/2-views-ex}
\begin{rm}
The pattern of Ex.~\ref{3/2-msa-ex} can be applied to the
extension of $\MSA$ that takes the `first order views' of \cite{Views}
in the role of signature morphisms.
Since first order views are more general the the $\MSA$ signature
morphisms, the resulting $\3/2$-institution based upon partial first
order views can thought as an extension of $\3/2 \MSA$.   
\end{rm}
\end{example}

So far the Examples
\ref{3/2-pl-ex}, \ref{3/2-msa-ex}, \ref{3/2-submsa-ex} and
\ref{3/2-views-ex} are based upon a pattern that can be described as
follows: 
\begin{enumerate}

\item Consider a concrete 1-institution (that may be quite common). 

\item Consider some form of partiality for its signature morphisms;
  often this can be done in several different ways (see
  Ex.~\ref{3/2-submsa-ex}).  

\item Keep the sentences and the models of the original institution,
  but based on the partiality of the signature morphisms extend the
  concepts of sentence translations and of model reducts to
  $\3/2$-institutional ones.
  The partiality of the sentence translations amounts to the fact that
  only the sentences that only involve symbols from the definition domain
  of the (partial) signature morphism can be translated.
  The relation-like aspect of the model reducts amounts to the fact
  that symbols that are outside the definition domain of the (partial)
  signature morphisms can be interpreted in several different ways in
  the models.  

\item The satisfaction relation of the resulting $\3/2$-institution is
  inherited from the original 1-institution. 

\end{enumerate}
This pattern pervades a lot of useful $\3/2$-institutions and can be
captured as a generic mathematical construction that derives
$\3/2$-institutions from 1-institutions.
The main topic of this paper is precisely to explain mathematically
this pattern, and then on such basis to derive general properties that
are useful in the envisaged applications of $\3/2$-institution
theory. 
However in \cite{3/2Inst} there are interesting examples of
$\3/2$-institutions that fall short off this pattern.

\section{Generic partial signature morphisms} 
\label{gen-3/2-institution-sec}

In this section we present a generic method for constructing
$\3/2$-institutions on top of 1-institutions that is based on extending
the category of the signatures by considering partiality for the
signature morphisms.   
Instances of this generic construction include $\3/2 \PL$, $\3/2 \MSA$
but also the $\3/2$-subinstitutions of $\3/2 \MSA$ from
Ex.~\ref{3/2-submsa-ex}. 

The structure of the section is as follows:
\begin{enumerate}

\item We recall from the literature the concept of \emph{inclusion
    system} that we employ for building generic partiality for the
  signature morphisms.  

\item Given a 1-category of $\Sign$ endowed with an inclusion system
  we build a $\3/2$-category $p\Sign$, that extends $\Sign$, and whose
  arrows are `partial maps' in $\Sign$. 
  The categorical literature has an established approach to those via
  spans (e.g. \cite{robinson-rosolini88,jay91}, etc.), and in
  principle we follow that.  
  However the distinctive feature of our approach is the use of
  inclusion systems, which leads to somehow simpler constructions
  and proofs as it avoids the quotienting inherent in the standard
  span-based approaches to partial maps. 
  We show how inclusion systems and colimits in $p\Sign$ are inherited
  from $\Sign$. 

\item Then the sentence and the model structures of the constructed 
  $\3/2$-institution are developed from those of the base  
  1-institution on the basis of the partial maps in $\Sign$. 
  The satisfaction relation of the $\3/2$-institution is inherited
  from the base 1-institution.  

\item We show how lax cocones of signature morphisms admitting model
  amalgamation in the constructed $\3/2$-institution can be
  obtained from cocones of signature morphisms admitting model
  amalgamation in the base 1-institution. 

\item We provide a taxonomy of theory morphisms in the constructed
  $\3/2$-institutions, that reflects various ways to achieve
  partiality for theory morphisms.   

\end{enumerate}

% Given an institution $\I = (\Sign,\Sen,\Mod,\models)$ such that its
% category of signatures $\Sign$ is endowed with an inclusion system, we
% define a canonical $\3/2$-institution 
% $\3/2 \I = (p\Sign,p\Sen,\Mod',\models)$.

\subsection{Inclusion systems}

Inclusion systems were introduced in~\cite{modalg} as a categorical device
supporting an abstract general study of structuring of specification and
programming modules that is independent of any underlying logic.
They have been used in a series of general module algebra studies such
as~\cite{modalg,goguen-rosu2004,iimt} but also for developing
axiomatisability~\cite{rosu-iel,edins,iimt} and
definability~\cite{aiguier-barbier2005} results within the framework of the
so-called institution-independent model theory~\cite{iimt}.
Inclusion systems capture categorically the concept of set-theoretic inclusion
in a way reminiscent of how the rather notorious concept of factorization
system~\cite{borceux94} captures categorically the set-theoretic injections;
however, in many applications the former are more convenient than the latter.
Here we recall from the literature the basics of the theory of inclusion
systems.

The definition below can be found in the recent literature on inclusion systems
(e.g.~\cite{iimt}) and differs slightly from the original one of~\cite{modalg}.

%\begin{definition}[Inclusion systems]\label{inc-sys-dfn}
  A pair of categories $\mpair{\I}{\mathcal{E}}$ is an \emph{inclusion
    system} for a category $\C$ if $\I$ and $\mathcal{E}$ are
  two broad subcategories of $\C$ such that
  \begin{enumerate} 

  \item $\I$ is a partial order (with the order relation denoted by
    $\subseteq$), and

  \item every arrow $f$ in $\C$ can be factored uniquely as 
    $f = e_f ; i_f$ with $e_f \in \mathcal{E}$ and $i_f \in \I$.

  \end{enumerate}
  The arrows of $\I$ are called \emph{abstract inclusions}, and  
  the arrows of $\mathcal{E}$ are called \emph{abstract surjections}.   
  The domain of the inclusion $i_f$ in the factorization of $f$ is called the
  \emph{image of $f$} and is denoted as $\mathrm{Im}(f)$ or $f(A)$ when $A$ is
  the domain of $f$.
  An inclusion $i \co A \ra B$ may also be denoted simply by $A \subseteq B$.

  % The inclusion system \emph{has unions} when $(|\C|,\subseteq)$ has
  % least upper bounds. 
  % An inclusion system $\mpair{\I}{\mathcal{E}}$ is said to be \emph{epic} when
  % all abstract surjections of \(\mathcal{E}\) are epimorphisms in \(\C\).
%\end{definition}

In~\cite{rosu-cazanescu97} it is shown that the class $\I$ of abstract
inclusions determines the class $\mathcal{E}$ of abstract surjections.
In this sense,~\cite{rosu-cazanescu97} gives an explicit equivalent definition
of inclusion systems that is based only on the class $\I$ of abstract
inclusions. 

Given categories $\C$ and $\C'$, each endowed with an inclusion
system, a functor $\C \to \C'$ is called \emph{inclusive} when
it maps abstract inclusions to abstract inclusions. 
This is the established structure-preserving mapping between inclusion
systems (see \cite{modalg,iimt,gis}, etc.).  

The literature contains many other examples of inclusion systems for
the categories of signatures and for the categories of models of
various institutions from logic or from specification theory.   
We recall here only a couple of them.  

\begin{example}[Inclusion system for $\PL$ signatures]\label{pl-inc-sys-ex}
\begin{rm}
The standard example of inclusion system is that from $\Set$, with
set theoretic inclusions in the role of the abstract inclusions and surjective
functions in the role of the abstract surjections.
% The unions in this inclusion system are the usual unions of sets. 
\end{rm}  
\end{example}

\begin{example}[Inclusion systems for
  $\MSA$-signatures]\label{msa-inc-sys-ex}
\begin{rm}
  Besides the trivial inclusion system that can be defined in any category
  (i.e.\ identities as abstract inclusions and all arrows as abstract
  surjections) the category of $\MSA$-signatures admits also the following three
  non-trivial inclusion systems:

  \begin{center}
    \begin{tabular}{lll}
      \toprule
      inclusion system & abstract surjections & abstract inclusions \\
      & $\varphi \co (S, F) \ra (S', F')$ & $(S,F) \subseteq (S',F')$ \\
      \midrule
      \emph{closed} & $\varphi^\st \co S \ra S'$ surjective & $S \subseteq S'$ \\
      & & $F_{w \ra s} = F'_{w \ra s}$ for $w \in S^*$, $s\in S$
      \\  [1ex] \hdashline 
      \emph{strong} & $\varphi^\st \co S \ra S'$ surjective & $S \subseteq S'$ \\
      & $F'_{w' \ra s'} = \bigcup_{\varphi^{\st} (ws) = w's'} \varphi^{\op} (F_{w \ra s})$ &
      $F_{w \ra s} \subseteq F'_{w \ra s}$ for $w \in S^*$, $s \in S$
      \\[1ex] \hdashline
      \emph{nearly strong} & $\varphi^\st \co S \ra S'$ any function & 
      $S = S'$ \\
      & $F'_{w' \ra s'} = \bigcup_{\varphi^{\st} (ws) = w's'} \varphi^{\op} (F_{w \ra s})$ &
      $F_{w \ra s} \subseteq F'_{w \ra s}$ for $w \in S^*$, $s \in S$ \\
      \bottomrule
    \end{tabular} 
  \end{center}
% From these three only the strong inclusion system has unions which are 
% $(S,F) \cup (S',F') = (S \cup S',F'')$ where 
% \[
% F''_{w\to s} = \begin{cases}
%   F_{w\to s} \cup F'_{w\to s} & \text{when } w\in (S \cap S')^*, s\in S \cap S' \\[.2em]
%   F_{w\to s} & \text{when } w\in (S \setminus S')^*, s\in S \setminus
%   S' \\[.2em]
%   F'_{w\to s} & \text{when } w\in (S' \setminus S)^*, s\in S'
%   \setminus S.
% \end{cases}
% \]
  % \noindent Note that the strong inclusion system of the category of
  % \MSA-signatures is epic.
\end{rm}
\end{example}

\begin{example}[Inclusion systems for theory morphisms]
\begin{rm}
In any institution such that its category $\Sign$ of signatures is
endowed with an inclusion system such that $\Sen$ is inclusive, its
category of \emph{closed} theories (which is the corresponding full
subcategory of $\Th^{\I}$) may inherit this inclusion system
in two different ways.  
This is well known in the literature (e.g. \cite{iimt}\footnote{But
  there ``theories'' are our ``closed theories''}) and goes as
shown in the following table:

  \begin{center}
    \begin{tabular}{lll}
      \toprule
      inclusion system & abstract surjections & abstract inclusions \\
      & $\varphi \co (\Sigma,E) \to (\Sigma',E')$ 
      & $(\Sigma,E) \subseteq (\Sigma',E')$ \\
      \midrule
      \emph{closed} & $\varphi \co \Sigma \to \Sigma'$ abstract
                      surjection & $\Sigma \subseteq \Sigma'$ and 
                                   $E = \Sen(\Sigma) \cap E'$\\
      [1ex] \hdashline 
      \emph{strong} & $\varphi \co \Sigma \to \Sigma'$ abstract
                      surjection and $\Sen(\varphi)E = E'   $
                    & $\Sigma \subseteq \Sigma'$ \\
      \bottomrule
    \end{tabular} 
  \end{center}

\end{rm}
\end{example}

\begin{definition}
In any category endowed with an inclusion system, a cospan of arrows
$f_1 \co A_1 \to A, f_2 \co A_2 \to A$ is called \emph{semi-inclusive}
when one of $f_1$ or $f_2$ is an abstract inclusion. 
\end{definition}

The following property of inclusion systems, which can be found in 
\cite{iimt}, has a special relevance in what follows.   

\begin{lemma}\label{pullback-lem}
In a category endowed with an inclusion system and which has
pullbacks of semi-inclusive cospans, for any $f \co A \ra B$ and any
inclusion $B' \subseteq B$ there exists an unique pullback such that
$A'\subseteq A$:  
\begin{equation}\label{pullback-diag}
\xymatrix{
A \ar[r]^f & B \\
A' \ar[u]^{\subseteq} \ar[r]_{f'} & B' \ar[u]_{\subseteq}
}
\end{equation}
\end{lemma}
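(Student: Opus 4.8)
The plan is to take an arbitrary pullback of the semi-inclusive cospan and then \emph{replace} its leg over $A$ by an honest abstract inclusion, using only that abstract surjections compose and preserve images, and that abstract inclusions are monic. For the latter standard fact: if $g_1 ; k = g_2 ; k$ for an inclusion $k$, then factoring $g_1$ and $g_2$ and appealing to the uniqueness of $\langle\I,\mathcal{E}\rangle$-factorisations together with the partial-order property of $\I$ forces $g_1 = g_2$. In particular the inclusion $i \co B' \subseteq B$ is monic, a fact I will use repeatedly.

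First I would invoke the hypothesis to choose a pullback $(P,p_1,p_2)$ of the cospan $(f,i)$, with $p_1 \co P \ra A$, $p_2 \co P \ra B'$ and $p_1 ; f = p_2 ; i$. Factoring $p_1$ through the inclusion system gives $p_1 = e ; j$ with $e \in \mathcal{E}$ and $j \co A' \subseteq A$ the abstract inclusion onto $A' = \mathrm{Im}(p_1)$. The key step is then to show that $j ; f$ factors through $B'$. Computing $e ; (j ; f) = p_1 ; f = p_2 ; i$, and noting that composing an inclusion on the right does not enlarge the image, one gets $\mathrm{Im}(e ; (j;f)) = \mathrm{Im}(p_2) \subseteq B'$; since $e$ is an abstract surjection, composing it on the left is image-preserving, so $\mathrm{Im}(j;f) = \mathrm{Im}(e ;(j;f)) \subseteq B'$. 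Hence $j ; f = f' ; i$ for a unique $f' \co A' \ra B'$, which produces exactly the commuting square of diagram \eqref{pullback-diag} with $A' \subseteq A$.

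It then remains to verify the universal property and the uniqueness clause. For the universal property, given a competing cone $(g_1 \co X \ra A,\ g_2 \co X \ra B')$ with $g_1 ; f = g_2 ; i$, the pullback $P$ supplies a unique $u \co X \ra P$ with $u ; p_1 = g_1$ and $u ; p_2 = g_2$; setting $v = u ; e$ one obtains $v ; j = u ; p_1 = g_1$ directly, while $v ; f' ; i = u ; e ; j ; f = u ; p_1 ; f = u ; p_2 ; i = g_2 ; i$ together with monicity of $i$ yields $v ; f' = g_2$, and monicity of $j$ gives the uniqueness of $v$. For the uniqueness of the pullback among those whose leg over $A$ is an inclusion, the canonical isomorphism $\alpha$ between two such pullbacks commutes with the two inclusions into $A$; a factorisation argument shows $\alpha$ is itself an abstract inclusion, and an abstract inclusion that is invertible must be an identity, so the two solutions coincide.

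The main obstacle is the middle step, namely establishing $\mathrm{Im}(j;f) \subseteq B'$ and thereby the existence of $f'$: this is the only place where the pullback equation, the image-preserving behaviour of abstract surjections, and the monicity of the inclusion $i$ must all be combined correctly. Once $f'$ is in hand, the verification that the square is a pullback is a routine diagram chase through $P$, and the uniqueness reduces to the observation that invertible abstract inclusions are identities.
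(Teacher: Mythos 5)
Your proof is correct. Note that the paper itself does not prove this lemma at all --- it is imported from the literature (\cite{iimt}) --- so there is no in-paper argument to compare against; your route (take an arbitrary pullback of the semi-inclusive cospan, factor its leg over $A$ through the inclusion system, transport the universal property along the abstract surjection, and derive uniqueness from unique factorisation plus antisymmetry of $\I$) is exactly the standard one, and the auxiliary facts you invoke (inclusions are monic, left composition with abstract surjections and right composition with inclusions preserve images) do follow from unique factorisation as you indicate. The one step you leave as an ``observation'', that an invertible abstract inclusion is an identity, also holds but deserves its own factorisation argument: if $\alpha$ is an inclusion with inverse $\beta$, factor $\beta = e;i$ and compare $e;(i;\alpha) = 1$ with the trivial factorisation of the identity (identities lie in both components, these being broad subcategories) to conclude $e = 1$ and hence $\beta = i \in \I$; antisymmetry of the partial order $\I$ then forces $\alpha$ to be an identity.
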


\subsection{Partial signature morphisms}

Partial maps in abstract categories are well known in the literature,
one of the earliest references being \cite{robinson-rosolini88}.
There are only slight differences between different approaches, all of
them defining partial maps as equivalences classes of spans of arrows.
Here we come up with an inclusion systems-based variant that avoids
quotients. 

\begin{definition}\label{partial-def}
Given a category $\Sign$ endowed with an inclusion
system and which has pullbacks of semi-inclusive cospans, for any
$\Sigma,\Sigma' \in |\Sign|$,  
a \emph{partial $\Sign$-morphism} $\varphi \co \Sigma \pto \Sigma'$
consists of a $\Sign$-morphism $\varphi^0 \co \Sigma_0 \ra \Sigma'$
such that $\Sigma_0 \subseteq \Sigma$. 
We may denote $\Sigma_0$ by $\DOM \varphi$. 
% is a pair $(\DOM \varphi, \varphi^0)$ such that  
% $\DOM \varphi \subseteq \Sigma$ and 
% $\varphi^0 \in \Sign(\DOM \varphi, \Sigma')$. 

Given $\varphi \co \Sigma \pto \Sigma'$ and $\varphi' \co \Sigma' \pto
\Sigma''$ their \emph{composition} $\varphi;\varphi'$ is defined by
the following diagram:
\begin{equation}\label{diag-comp}
\xymatrix  @C-1em {
\Sigma  \ar@{.>}@/^{2pc}/[rrrr]^-{\varphi;\varphi'} 
%\ar @{.>}@{-<} `u[rrrr] `[rrrr]  [rrrr]
\ar@{.>}[rr]^\varphi & & \Sigma' \ar@{.>}[rr]^{\varphi'} & & \Sigma'' \\
 & \DOM \varphi \ar[ul]^{\subseteq} \ar[ur]_{\varphi^0} &
 (\Diamond) & \DOM
 \varphi' \ar[ur]_{\varphi'^0}
 \ar[ul]^{\subseteq}   & \\
 & & \DOM \varphi;\varphi' \ar[ul]^{\subseteq} \ar[ur]_{(\varphi^0)'}
 \ar `r[uurr] [uurr]_{(\varphi;\varphi')^0} & & 
}
\end{equation}
where the square ($\Diamond$) is the unique pullback of $\varphi^0$
and $\DOM \varphi' \subseteq \Sigma'$ given by Lemma
\ref{pullback-lem}. 

Given $\varphi,\theta \co \Sigma \pto \Sigma'$, then $\varphi \leq
\theta$ if and only if $\DOM \varphi \subseteq \DOM \theta$ and 
$\varphi^0 = (\DOM \varphi \subseteq \DOM \theta);\theta^0$. 
\[
\xymatrix  {
\dom{\theta} = \dom{\varphi}  & \cod{\theta} = \cod{\varphi} \\
\DOM \theta \ar[u]^\subseteq \ar[ur]_{\theta_0} & \\
\DOM \varphi \ar[u]^\subseteq \ar `r[uur] [uur]_{\varphi^0} & 
}
\]
\end{definition}

Note the overloading of notations $\pto$ and $\DOM\varphi$ here with
the corresponding ones from partial functions. 
In the abstract context they are meant to suggest abstract 
partiality rather than concrete partiality. 
However in the example of $\PL$ and $\3/2 \PL$ their meanings do
coincide. 
Also giving the pair $\varphi^0$ such that $\dom{\varphi^0} \subseteq
\dom{\varphi}$ is the same with giving the span  $\DOM \varphi
\subseteq \dom{\varphi}$, $\varphi^0$ in $\Sign$ 
(the first arrow being an abstract inclusion). 

\begin{proposition}
Let $p\Sign$ have the same objects as $\Sign$ and the partial
$\Sign$-morphisms as arrows. Under the definitions given in
Dfn.~\ref{partial-def}, $p\Sign$ is a $\3/2$-category. 
\end{proposition}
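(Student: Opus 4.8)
The plan is to check, one by one, the three things required of a $\3/2$-category: that $p\Sign$ is a category, that every hom-set $p\Sign(\Sigma,\Sigma')$ is a partial order under $\leq$, and that composition is monotone with respect to these orders. Throughout, the decisive tool is the uniqueness clause of Lemma~\ref{pullback-lem}, which pins down a \emph{canonical} pullback whose left leg is an abstract inclusion; this is exactly what lets us work with honest spans rather than equivalence classes.

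First I would dispatch the category axioms. The identity on $\Sigma$ is the partial morphism with $\DOM(1_\Sigma)=\Sigma$ and underlying arrow $1_\Sigma \co \Sigma \ra \Sigma$, and the unit laws $1_\Sigma;\varphi=\varphi$ and $\varphi;1_{\Sigma'}=\varphi$ follow because pulling back along an identity inclusion returns the original inclusion, which the uniqueness in Lemma~\ref{pullback-lem} identifies as \emph{the} chosen pullback. Since $\I$ is a partial order its arrows compose, so the composite inclusion $\DOM(\varphi;\varphi')\subseteq\DOM\varphi\subseteq\Sigma$ is again an abstract inclusion and composition is well-defined. For associativity of $\varphi;\varphi';\varphi''$ I would invoke the standard pasting lemma for pullbacks: both bracketings are computed by iterated pullbacks of the underlying arrows along inclusions, and the uniqueness of the inclusion-legged pullback forces the two resulting spans $\DOM\subseteq\Sigma$, together with their underlying arrows, to coincide on the nose.

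Next, that $\leq$ is a partial order on each hom-set. Reflexivity is immediate from $\DOM\varphi\subseteq\DOM\varphi$ and $\varphi^0=(\,\subseteq\,);\varphi^0$. Antisymmetry uses that $\I$ is itself a partial order: $\DOM\varphi\subseteq\DOM\theta$ and $\DOM\theta\subseteq\DOM\varphi$ give $\DOM\varphi=\DOM\theta$, and the defining equations then force $\varphi^0=\theta^0$. Transitivity follows by composing the domain inclusions and pasting the two factorisation equations.

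Finally, monotonicity of composition, which I expect to be the main obstacle. I would establish it separately in each argument and then chain $\varphi;\varphi'\leq\theta;\varphi'\leq\theta;\theta'$. For left monotonicity, with $\varphi\leq\theta$ and a fixed $\psi$, the hypothesis says $\varphi^0$ factors as $(\DOM\varphi\subseteq\DOM\theta);\theta^0$; pulling $\varphi^0$ back along $\DOM\psi\subseteq\cod\varphi$ can therefore be carried out by first pulling back $\theta^0$ (yielding $\DOM(\theta;\psi)\subseteq\DOM\theta$) and then pulling back the inclusion $\DOM\varphi\subseteq\DOM\theta$ along it, whose pullback is the intersection $\DOM\varphi\cap\DOM(\theta;\psi)$; this gives $\DOM(\varphi;\psi)\subseteq\DOM(\theta;\psi)$ and exhibits $(\varphi;\psi)^0$ as the restriction of $(\theta;\psi)^0$, which is precisely $\varphi;\psi\leq\theta;\psi$. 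Right monotonicity is symmetric: from $\varphi'\leq\theta'$ one pulls $\psi^0$ back along $\DOM\varphi'\subseteq\DOM\theta'\subseteq\cod\psi$ in two stages, again using Lemma~\ref{pullback-lem} to obtain the inclusion $\DOM(\psi;\varphi')\subseteq\DOM(\psi;\theta')$ and the matching of underlying arrows. The crux — and the part demanding care — is the bookkeeping of which arrows are inclusions and the repeated appeal to the uniqueness clause to identify pullbacks computed in different orders; once those identifications are in place, the remaining diagram chasing is routine.
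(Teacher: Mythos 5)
Your proof is correct and takes essentially the same route as the paper's: the unit and associativity laws via pasting of the unique inclusion-legged pullbacks of Lemma~\ref{pullback-lem}, and monotonicity of composition established one argument at a time (by factoring $\varphi^0$ through $\DOM\varphi\subseteq\DOM\theta$ and computing the pullback in two stages), then chained. You are in fact slightly more complete than the paper --- which skips the unit laws and the verification that $\leq$ is a partial order on each hom-set --- while your remaining glosses (e.g.\ that both projections of a pullback of two abstract inclusions are again abstract inclusions, which underlies your ``intersection'' step and follows from the uniqueness of factorisation since identities are abstract surjections) are exactly the points the paper's own diagram chase also leaves implicit.
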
 

\begin{proof}
The associativity of the composition in $p\Sign$ can be determined by
chasing the following diagram and by resorting to Lemma
\ref{pullback-lem}. 
\[
\xymatrix  @C-1em{
\Sigma  %\ar@{.>}@/^{2pc}/[rrrr]^-{\varphi;\varphi'} 
%\ar @{.>}@{-<} `u[rrrr] `[rrrr]  [rrrr]
\ar@{.>}[rr]^{\varphi_1} & & \Sigma_1 \ar@{.>}[rr]^{\varphi_2} & & \Sigma_2
\ar@{.>}[rr]^{\varphi_3} & & \Sigma_3 \\
 & \DOM \varphi_1 \ar[ul]^{\subseteq} \ar[ur]_{\varphi_1^0} & (1)
  & \DOM  \varphi_2 \ar[ur]_{\varphi_2^0} \ar[ul]^{\subseteq} & (2) 
  & \DOM  \varphi_3
 \ar[ul]^{\subseteq}  \ar[ur]_{\varphi_3^0} & \\
 & & \DOM \varphi_1;\varphi_2 \ar[ul]^{\subseteq} \ar[ur]_{(\varphi_1^0)'}
 %\ar `r[uurr] [uurr]_{(\varphi;\varphi')^0} 
& (3) & \DOM \varphi_2;\varphi_3 \ar[ul]^{\subseteq}
\ar[ur]_{(\varphi_2^0)'}  \\
& & & \DOM \varphi_1;\varphi_2;\varphi_3 \ar[ul]^{\subseteq} 
\ar[ur]_{(\varphi_1^0)''} & & & 
}
\]
The squares (1), (2), (3) are unique pullbacks as determined by Lemma
\ref{pullback-lem}.  
Then the square (2)+(3) corresponds to the square ($\Diamond$) in the diagram of 
Dfn.~\ref{partial-def} for the composition
$(\varphi_1;\varphi_2);\varphi_3$ while the square (1)+(3) corresponds
to the square ($\Diamond$) for the composition
$\varphi_1;(\varphi_2;\varphi_3)$. 

The identities of $p\Sign$ are the identities of $\Sign$ (we skip
here the straightforward proof that these are identities in
$p\Sign$ indeed).  

Now we prove the preservation of the partial orders on the
hom-sets by the composition; let us do here only one side of that, the
argument for the other side being similar.  
We consider $\varphi_1 \leq \varphi_2 \co \Sigma \pto \Sigma'$ and 
$\theta \co \Sigma' \pto \Sigma''$. 
The argument for $\varphi_1 ; \theta \leq \varphi_2 ; \theta$ is
apparent by analysing the following diagram:
\[
\xymatrix  {
\Sigma  \ar@{.>}@/^{.5pc}/[rr]^{\varphi_1}_{\leq}
\ar@{.>}@/_{.5pc}/[rr]_{\varphi_2} & & \Sigma' 
\ar@{.>}[rr]^{\theta} & & \Sigma'' \\
 & \DOM \varphi_1 \ar[ul]^{\subseteq} \ar[ur]_{\varphi_1^0} &
 & \DOM  \theta \ar[ur]_{\theta^0}
 \ar[ul]^{\subseteq}   & \\
 & \DOM \varphi_2 \ar[u]^\subseteq & \DOM \varphi_1;\theta
 \ar[ul]^{\subseteq} \ar[ur]_{(\varphi_1^0)'}   & & \\
 & & \DOM \varphi_2;\theta \ar[u]^\subseteq   \ar[ul]^{\subseteq} 
         \ar `r[uur] [uur]_-{(\varphi_2^0)'}
 & & 
}
\]
\end{proof}

\begin{fact}\label{embed-fact}
There is a canonical faithful functor $[{\_}] \co \Sign \to p\Sign$
which is the identity on the objects and such that $[\chi]^0 = \chi$
for each arrow $\chi \in \Sign$. 
\end{fact}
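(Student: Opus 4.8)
The plan is to define $[\_]$ on objects as the identity and, for a $\Sign$-morphism $\chi \co \Sigma \ra \Sigma'$, to let $[\chi] \co \Sigma \pto \Sigma'$ be the partial $\Sign$-morphism whose definition domain is the whole of $\Sigma$, i.e.\ $\DOM [\chi] = \Sigma$ with the inclusion $\DOM [\chi] \subseteq \Sigma$ equal to $1_\Sigma$, and whose total part is $[\chi]^0 = \chi$. This is well defined: since $\I$ is a broad subcategory of $\Sign$ it contains every identity, so $1_\Sigma$ is a legitimate abstract inclusion $\Sigma \subseteq \Sigma$, and hence $[\chi]$ is a genuine arrow of $p\Sign$ in the sense of Dfn.~\ref{partial-def}. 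Faithfulness is then immediate, since $[\chi]^0 = \chi$ recovers $\chi$ from $[\chi]$, so the assignment $\chi \mapsto [\chi]$ is injective on each hom-set.

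First I would check preservation of identities, which is direct: $[1_\Sigma]$ has full definition domain $\Sigma$ and total part $1_\Sigma$, and these are exactly the data of the identity of $\Sigma$ in $p\Sign$ (recall the identities of $p\Sign$ are those of $\Sign$). The substantive step is preservation of composition. Given $\chi \co \Sigma \ra \Sigma'$ and $\chi' \co \Sigma' \ra \Sigma''$, I would compute $[\chi];[\chi']$ through diagram~\eqref{diag-comp} with $\varphi = [\chi]$ and $\varphi' = [\chi']$. The key observation is that the pullback square $(\Diamond)$ degenerates: it is the pullback of $\varphi^0 = \chi \co \Sigma \ra \Sigma'$ along the inclusion $\DOM [\chi'] \subseteq \Sigma'$, which here is the identity $1_{\Sigma'}$ because $\DOM [\chi'] = \Sigma'$. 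Pulling back along an identity gives the trivial pullback, and by the uniqueness in Lemma~\ref{pullback-lem} this is precisely the pullback selected by the composition; hence $\DOM([\chi];[\chi']) = \Sigma$ with inclusion $1_\Sigma$ and induced arrow $(\varphi^0)' = \chi$. Consequently $([\chi];[\chi'])^0 = (\varphi^0)';\varphi'^0 = \chi;\chi'$ on the full domain $\Sigma$, so that $[\chi];[\chi'] = [\chi;\chi']$, as required.

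The only step that is not pure bookkeeping is this last composition check, and within it the single point to get right is the degeneration of $(\Diamond)$ to the identity pullback; once that is observed, everything collapses to the equality $([\chi];[\chi'])^0 = \chi;\chi'$. I do not expect any genuine obstacle here, since the whole statement amounts to recording that the total morphisms sit inside the partial ones as exactly those with maximal (full) definition domain.
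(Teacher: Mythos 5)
Your proof is correct and takes the natural route the paper implicitly intends: the paper states this as a Fact without proof, and your verification (full definition domain with inclusion $1_\Sigma$, faithfulness from $[\chi]^0 = \chi$, and functoriality via the degeneration of the pullback square $(\Diamond)$ along the identity inclusion, using the uniqueness in Lemma~\ref{pullback-lem}) is exactly the routine argument being omitted. No gaps.
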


\begin{example}
\begin{rm}
The $\3/2$-category of the signatures of $\3/2 \PL$ is the category of
partial $\Sign^\PL$-morphisms when considering the standard inclusion
system in $\Set$. 
The  $\3/2$-category of the signatures of $\3/2 \MSA$ is the category of
partial $\Sign^\MSA$-morphisms when considering the strong inclusion
system in $\Sign^\MSA$. 
The  $\3/2$-categories of the signatures of the $\3/2$-subinstitutions    
of $\3/2 \MSA$ from Ex.~\ref{3/2-submsa-ex} arise as categories of
partial $\Sign^\MSA$-morphisms when considering the closed and nearly
strong inclusion systems in $\Sign^\MSA$.
\end{rm}
\end{example}

\subsection{Inclusion systems for partial signature morphisms}

The functor of Fact \ref{embed-fact} transfers the inclusion system
of $\Sign$ to $p\Sign$; however this is not completely trivial as the
additional following technical property is needed: 

\begin{definition}
In any category endowed with an inclusion system and with pullbacks of
semi-inclusive cospans, we say that 
\emph{abstract surjections are stable under semi-inclusive pullbacks}
when for each pullback square like in diagram  
(\ref{pullback-diag}) if $f$ is an abstract surjection then $f'$ is
an abstract surjection too. 
\end{definition}

\begin{example}
\begin{rm}
The stability property under inclusive pullbacks holds widely in
examples.  
It is not difficult to check that all four inclusion systems of
Examples \ref{pl-inc-sys-ex} and \ref{msa-inc-sys-ex} have this
property. 
Let us do it here only for the strong inclusion system for the
$\MSA$ signatures. 
Consider an inclusive pullback square with respect to the strong
inclusion system of $\MSA$ signatures like in the diagram
(\ref{pullback-diag}):   
\begin{equation}\label{inc-pullback-msa-diag}
\xymatrix{
(S,F) \ar[r]^\varphi & (S_1,F_1) \\
(S',F') \ar[u]^{\subseteq} \ar[r]_{\varphi'} & (S'_1,F'_1) \ar[u]_{\subseteq}
}
\end{equation}
such that $\varphi$ is an abstract surjection. 
We have to show that $\varphi'$ is an abstract surjection too. 

Since $\varphi^\st$ is a surjective function it follows that for each
$s'\in S'_1 \subseteq S_1$ there exists $s \in S$ such that 
$\varphi^\st (s) = s'$. 
The pullback square (\ref{inc-pullback-msa-diag}) implies that the
following is a pullback square in $\Set$ (see \cite{iimt} for a
detailed general construction of pullbacks off signature morphisms in
$\MSA$): 
\begin{equation}\label{inc-pullback-msa-sorts-diag}
\xymatrix{
S \ar[r]^{\varphi^\st} & S_1 \\
S' \ar[u]^{\subseteq} \ar[r]_{\varphi'^\st} & S'_1 \ar[u]_{\subseteq}
}
\end{equation}
which means that $S' = \{ x \in S \mid \varphi^\st (x) \in S'_1 \}$. 
Consequently $s\in S'$ and $\varphi'^\st (s) = s'$. 
Thus shows that $\varphi'^\st$ is a surjective function too. 

The remaining part of the argument is slightly more intricate. 
Let $w_1, s_1$ and $\sigma_1 \in (F'_1)_{w_1 \to  s_1}$.
Since $(S'_1, F'_1) \subseteq (S_1, F_1)$ we have that 
$\sigma_1 \in (F_1)_{w_1 \to  s_1}$. 
Since $\varphi$ is abstract surjection there exists 
$w,s$ and $\sigma\in F_{w\to s}$ such that 
$\varphi^\op (\sigma) = \sigma_1$. 
By the construction of pullbacks in $\MSA$ (see diagram
(\ref{inc-pullback-msa-sorts-diag})) we know that $w\in S'^*$ and
$s\in S'$ and that $\varphi'^{\st} (w) = w_1$ and  
$\varphi'^{\st} (s) = s_1$. 
But the construction of pullbacks of $\MSA$ signature morphisms also
gives us that  
\[
F'_{w\to s} = \{ x \in F_{w\to s} \mid \varphi^{\op} (x) \in
(F'_1)_{w_1 \to  s_1} \}.
\]
Consequently $\sigma \in F'_{w\to s}$ and $\varphi'^{\op} (\sigma) \in
(F'_1)_{w_1 \to  s_1}$, which completes the proof that $\varphi'$ is
an abstract surjection of the strong inclusion system of the $\MSA$
signature morphisms. 
\end{rm}
\end{example}

\begin{proposition}
Assuming that in $\Sign$ the abstract surjections are stable under
inclusive pullbacks, the following gives an inclusion system in
$p\Sign$:  
\begin{itemize}

\item abstract inclusions: $[i]$, where $i$ is an abstract inclusion
  in $\Sign$; and 

\item abstract surjections: $\varphi$, such that $\varphi^0$ is an
  abstract surjection in $\Sign$. 

\end{itemize}
\end{proposition}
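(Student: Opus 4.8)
The plan is to verify the three defining clauses of an inclusion system for the two designated classes, exploiting throughout that the embedding $[\_]\co\Sign\to p\Sign$ of Fact~\ref{embed-fact} is a faithful functor and that a partial morphism $\varphi$ is entirely recorded by the pair consisting of the inclusion $\DOM\varphi\subseteq\dom{\varphi}$ and the total morphism $\varphi^0$. Write $\I_p=\{[i]\mid i\in\I\}$ for the proposed abstract inclusions and $\mathcal{E}_p=\{\varphi\mid\varphi^0\in\mathcal{E}\}$ for the proposed abstract surjections. Most of the verification reduces to the inclusion system of $\Sign$ together with Lemma~\ref{pullback-lem}; the single genuinely new ingredient is the stability hypothesis, which I expect to be the crux and which is needed only for the closure of $\mathcal{E}_p$ under composition.

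First I would check that $\I_p$ is a broad subcategory that is a partial order. Since $\I$ contains all objects and all identities, and since $[\_]$ is a functor with $[i];[j]=[i;j]$ and $i;j\in\I$, the class $\I_p$ is a broad subcategory of $p\Sign$. For the partial-order property, a morphism of $\I_p$ from $\Sigma$ to $\Sigma'$ is $[i]$ for some abstract inclusion $i\co\Sigma\subseteq\Sigma'$; as $\I$ is a partial order there is at most one such $i$, hence at most one such $[i]$, and antisymmetry is inherited likewise from $\I$. By faithfulness of $[\_]$ this makes $\I_p\cong\I$ as categories, so $\I_p$ is a partial order.

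Next, and this is the main point, I would establish that $\mathcal{E}_p$ is a broad subcategory. Containment of objects and of identities is immediate since $(1_\Sigma)^0=1_\Sigma\in\mathcal{E}$. For closure under composition, take $\varphi\co\Sigma\pto\Sigma'$ and $\varphi'\co\Sigma'\pto\Sigma''$ with $\varphi^0,\varphi'^0\in\mathcal{E}$, and read off from diagram~\eqref{diag-comp} in Dfn.~\ref{partial-def} that $(\varphi;\varphi')^0=(\varphi^0)';\varphi'^0$, where the square $(\Diamond)$ is the pullback of $\varphi^0$ along $\DOM\varphi'\subseteq\Sigma'$ supplied by Lemma~\ref{pullback-lem}. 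Here is exactly where the hypothesis enters: the square $(\Diamond)$ is a pullback of a semi-inclusive cospan with one leg the abstract inclusion $\DOM\varphi'\subseteq\Sigma'$, so stability of abstract surjections under inclusive pullbacks gives $(\varphi^0)'\in\mathcal{E}$ from $\varphi^0\in\mathcal{E}$; composing with $\varphi'^0\in\mathcal{E}$ and using that $\mathcal{E}$ is a subcategory yields $(\varphi;\varphi')^0\in\mathcal{E}$, i.e.\ $\varphi;\varphi'\in\mathcal{E}_p$. Without the stability assumption this step would fail, which is why I expect it to be the only nontrivial obstacle.

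Finally I would prove unique factorization. Given $\varphi\co\Sigma\pto\Sigma'$, factor its underlying morphism in $\Sign$ as $\varphi^0=e_{\varphi^0};i_{\varphi^0}$ with $e_{\varphi^0}\in\mathcal{E}$ and $i_{\varphi^0}\co\mathrm{Im}(\varphi^0)\subseteq\Sigma'$ in $\I$. Set $\varepsilon\co\Sigma\pto\mathrm{Im}(\varphi^0)$ to be the partial morphism with $\DOM\varepsilon=\DOM\varphi$ and $\varepsilon^0=e_{\varphi^0}$; then $\varepsilon\in\mathcal{E}_p$ and $[i_{\varphi^0}]\in\I_p$. Since $[i_{\varphi^0}]$ is total, forming $\varepsilon;[i_{\varphi^0}]$ requires only the pullback of $\varepsilon^0$ along an identity inclusion, which is trivial; hence $\DOM(\varepsilon;[i_{\varphi^0}])=\DOM\varphi$ and $(\varepsilon;[i_{\varphi^0}])^0=e_{\varphi^0};i_{\varphi^0}=\varphi^0$, giving $\varphi=\varepsilon;[i_{\varphi^0}]$. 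For uniqueness, any factorization $\varphi=\varepsilon';[j]$ with $\varepsilon'\in\mathcal{E}_p$ and $[j]\in\I_p$ again composes trivially (as $[j]$ is total), forcing $\DOM\varepsilon'=\DOM\varphi$ and $\varphi^0=\varepsilon'^0;j$ with $\varepsilon'^0\in\mathcal{E}$ and $j\in\I$; uniqueness of factorization in $\Sign$ then yields $\varepsilon'^0=e_{\varphi^0}$ and $j=i_{\varphi^0}$, whence $\varepsilon'=\varepsilon$ and $[j]=[i_{\varphi^0}]$. This settles all three clauses.
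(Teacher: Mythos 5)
Your proposal is correct and follows essentially the same route as the paper's own proof: the partial order on $\I_p$ via the faithful embedding $[\_]$, closure of $\mathcal{E}_p$ under composition by applying the stability hypothesis to the pullback square $(\Diamond)$ of diagram~\eqref{diag-comp}, existence of the factorization by lifting $\varphi^0 = e_{\varphi^0};i_{\varphi^0}$ with $\DOM$ unchanged, and uniqueness by reducing (via totality of the inclusion factor) to uniqueness of factorizations in $\Sign$. Your write-up is in fact slightly more explicit than the paper's at two points the paper leaves implicit, namely that composition with a total morphism $[j]$ involves only the trivial pullback along an identity inclusion, and that equality of partial morphisms requires matching both the definition domain and the total part.
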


\begin{proof}
That the abstract inclusions of $p\Sign$ form a partial order follows
from the functoriality and the faithfullness of the embedding $[{\_}]$. 
That the abstract surjections in $p\Sign$ form a subcategory follows
by inspecting the diagram (\ref{diag-comp}) and by applying the
stability property under inclusive pullbacks to the square
($\Diamond)$ and to $\varphi^0$. 
Then $(\varphi^0)'$ is abstract surjection (in $\Sign$) and
consequently $(\varphi;\varphi')^0 = (\varphi^0)';\varphi'^0$ is
abstract surjection (in $\Sign$) too. 

Any $\varphi \in p\Sign$ can be factored as shown in the following
figure (with $e_\varphi$ and $i_\varphi$ being abstract surjection and
inclusion, respectively): 
%\begin{equation}\label{p-inc-sys-diag}
\[
\xymatrix  @C+.7em {
\Sigma  \ar@{.>}@/^{2pc}/[rrrr]^-{\varphi} 
%\ar @{.>}@{-<} `u[rrrr] `[rrrr]  [rrrr]
\ar@{.>}[rr]^{e_\varphi} & & \varphi(\Sigma) 
\ar@{.>}[rr]^{i_\varphi = [i_{\varphi^0}]} & & \Sigma' \\
 & \DOM e_\varphi \ar[ul]^{\subseteq} \ar[ur]_{(e_\varphi)^0} & 
 & \varphi^0 (\DOM \varphi) \ar[ur]_{i_{\varphi^0}}
 \ar@{=}[ul]   & \\
 & & \DOM \varphi \ar@{=}[ul] \ar[ur]_{e_{\varphi^0}}
 \ar `r[uurr] [uurr]_{\varphi^0} & & 
}
\]
%\end{equation}
For showing the uniqueness of the factoring in $p\Sign$ let us assume
$\varphi = e;i$ where $e$ and $i$ are abstract surjections and
inclusions, respectively. 
There exists an abstract inclusion $i'$ in $\Sign$ such that $i =
[i']$.
It follows that $\varphi^0 = e^0 ; i'^0$.
By the uniqueness of the factoring in the inclusion system of $\Sign$
it follows that $e^0 = (e_\varphi)^0$ and that $i'^0 =
i_{\varphi^0}$, hence $e = e_\varphi$ and $i = i_\varphi$. 
\end{proof}

\begin{corollary}
The categories $p\Sign^{\PL}$ and $p\Sign^{\MSA}$ have inclusion
systems that inherit the respective inclusion systems of $\Sign^{\PL}$
(Example \ref{pl-inc-sys-ex}) and of $\Sign^{\MSA}$ (Example
\ref{msa-inc-sys-ex}). 
\end{corollary}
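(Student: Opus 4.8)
The plan is to obtain this as a direct application of the preceding Proposition, whose single substantive hypothesis---that abstract surjections be stable under inclusive pullbacks---has already been discharged in the Example above for precisely the inclusion systems at issue. So the work reduces to checking, for each of $\PL$ and $\MSA$, that the ambient category of signatures satisfies the standing assumptions under which $p\Sign$ and its inclusion system were constructed.

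First I would treat $\PL$. Here $\Sign^{\PL} = \Set$ carries the standard inclusion system of Example \ref{pl-inc-sys-ex}, with set-theoretic inclusions as abstract inclusions and surjective functions as abstract surjections. Since $\Set$ has all pullbacks it certainly has pullbacks of semi-inclusive cospans, so $p\Sign^{\PL}$ is well defined; and the stability of surjective functions under inclusive pullbacks is the instance of the stability property recorded in the Example. Invoking the Proposition then immediately yields the inclusion system on $p\Sign^{\PL}$ whose abstract inclusions are the $[i]$ for $i$ a set inclusion and whose abstract surjections are those $\varphi$ with $\varphi^0$ surjective, which is exactly the system inherited from $\Sign^{\PL}$.

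For $\MSA$ the argument runs along the same template, taking $\Sign^{\MSA}$ with the strong inclusion system of Example \ref{msa-inc-sys-ex}. The existence of pullbacks of semi-inclusive cospans in $\Sign^{\MSA}$ follows from the explicit construction of pullbacks of $\MSA$-signature morphisms recalled from \cite{iimt} (the same construction already used in the stability Example), and the stability of the strong abstract surjections under inclusive pullbacks is precisely what was verified in detail there. Applying the Proposition once more produces the asserted inclusion system on $p\Sign^{\MSA}$. I expect no real obstacle beyond this bookkeeping, since all the genuine content has been front-loaded into the stability Example and the Proposition; the only point meriting a second glance is confirming that $\Sign^{\MSA}$ admits the required semi-inclusive pullbacks, and should one wish to state the conclusion for the closed and nearly strong systems of Example \ref{msa-inc-sys-ex} as well, the very same template applies verbatim, as the Example asserts the stability property for all four inclusion systems.
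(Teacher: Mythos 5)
Your proposal is correct and follows essentially the same route as the paper: the corollary is an immediate instantiation of the preceding Proposition, with its stability hypothesis discharged by the preceding Example and the existence of pullbacks of semi-inclusive cospans supplied by $\Set$ (for $p\Sign^{\PL}$) and by the standard construction of pullbacks of $\MSA$-signature morphisms (for $p\Sign^{\MSA}$). Your closing remark that the same template covers the closed and nearly strong inclusion systems of $\Sign^{\MSA}$ is consistent with the paper's intent, since the Example asserts the stability property for all four inclusion systems.
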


% In order to be able to iterate the construction of partial mappings on
% $p\Sign$, and thus obtain $pp\Sign$, we need the following property as
% well. 

% \begin{proposition}
% $p\Sign$ has pullbacks of semi-inclusive cospans.
% \end{proposition}

% \begin{proof}
% \TODO{more}
% \end{proof}

% \begin{corollary}
% \TODO{partial mappings of partial mappings}
% \end{corollary}

% At this point an interesting issue arises.
% We have seen how given a category $\Sign$ endowed with an inclusion
% system, the category $p\Sign$ of the partial $\Sign$-morphisms admits
% an inclusion system.  
% The following result shows that partial mappings of partial mappings
% can be interpreted canonically as partial mappings. 

% \begin{proposition}
% There is a canonical functor $Q \co pp\Sign \to p\Sign$ which is a
% retract to the functor $[{\_}] \co p\Sign \to pp\Sign$.  
% \end{proposition}

% \begin{proof}
% The action of the retract $Q \co pp\Sign \to p\Sign$ is shown in the  
% following figure 
% \[
% \xymatrix @C+1.5em @R-1.5em { 
% \Sigma \ar@{.>}[r]^{\varphi} & \Sigma' & & \Sigma
% \ar@{.>}[r]^{Q(\varphi)} & \Sigma' \\
%  & & \mapsto & & \\
% \DOM \varphi \ar@{-->}[uur]^{\varphi^0} \ar[uu]^{[\subseteq]}& 
% \DOM \varphi^0 \ar[uu]_{\varphi^{00}} \ar[l]^{\subseteq} & & & 
% \DOM \varphi^0 \ar[uu]_{\varphi^{00}} \ar `l[uul] [uul]^{\subseteq}
% }
% \]
% where $\varphi \in pp\Sign$ and $\varphi^0 \in p\Sign$. 

% That retract property, namely that for each $\varphi\in p\Sign$ we 
% have $Q([\varphi]) = \varphi$, can checked immediately. 

% \TODO{the functoriality property}

% \end{proof}

\subsection{Pushouts in the category of partial signature morphisms}

The following result shows that a relevant class of lax pushouts in
$p\Sign$ is determined on the basis of pushouts in $\Sign$. 
It can also be extended easily to other colimits. 

\begin{proposition}\label{pushout-partial-prop}
If $\Sign$ has (weak) pushouts then $p\Sign$ has (weak) lax
$\Sign$-pushouts.\footnote{Where $\Sign$ is considered as a
  subcategory of $p\Sign$ via the embedding of Fact \ref{embed-fact}.}  
\end{proposition}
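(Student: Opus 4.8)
The plan is to reduce the statement to an ordinary colimit computation in $\Sign$ and then to assemble that colimit from two pushouts. Write the span as $\varphi_1 \co \Sigma \pto \Sigma_1$ and $\varphi_2 \co \Sigma \pto \Sigma_2$. First I would unfold what a lax $\Sign$-cocone over this span is: its legs $\theta_0 \co \Sigma \ra \Omega$, $\theta_1 \co \Sigma_1 \ra \Omega$, $\theta_2 \co \Sigma_2 \ra \Omega$ lie in $\Sign$, hence are total, and I regard them as $[\theta_k]$ via Fact~\ref{embed-fact}. Computing the composite $\varphi_k;[\theta_k]$ through the composition diagram~\eqref{diag-comp}, the pullback square $(\Diamond)$ is trivial because $\DOM[\theta_k]=\Sigma_k$ is the whole codomain of $\varphi_k^0$; hence $\varphi_k;[\theta_k]$ is the partial morphism with $\DOM(\varphi_k;[\theta_k])=\DOM\varphi_k$ and $(\varphi_k;[\theta_k])^0=\varphi_k^0;\theta_k$. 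Since $[\theta_0]$ is total, unwinding the lax-cocone inequality $\varphi_k;[\theta_k]\leq[\theta_0]$ by the definition of $\leq$ in Dfn.~\ref{partial-def} leaves the single equation
\[
\varphi_k^0;\theta_k \;=\; (\DOM\varphi_k\subseteq\Sigma);\theta_0 \qquad (k=1,2)
\]
in $\Sign$. This is the conceptual crux: the inequalities collapse to equalities precisely because the apex legs are required to be total. Consequently a lax $\Sign$-cocone over $(\varphi_1,\varphi_2)$ is exactly a cocone in $\Sign$ over the ``fence'' diagram $\Sigma_1 \xleftarrow{\varphi_1^0}\DOM\varphi_1\subseteq\Sigma\supseteq\DOM\varphi_2\xrightarrow{\varphi_2^0}\Sigma_2$, and a minimal lax $\Sign$-cocone (a lax $\Sign$-pushout) is precisely a colimit of this diagram.

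I would then build that colimit from two pushouts, which is where the hypothesis on $\Sign$ enters. Let $P_1$ be the pushout of the span $\Sigma_1\xleftarrow{\varphi_1^0}\DOM\varphi_1\subseteq\Sigma$, with structure maps $u_1\co\Sigma_1\ra P_1$ and $v_1\co\Sigma\ra P_1$; and let $P_2$ be the pushout of the span $P_1\xleftarrow{(\DOM\varphi_2\subseteq\Sigma);v_1}\DOM\varphi_2\xrightarrow{\varphi_2^0}\Sigma_2$, with structure maps $w\co P_1\ra P_2$ and $u_2\co\Sigma_2\ra P_2$. As candidate lax $\Sign$-pushout I take $P_2$ with legs $\theta_1=u_1;w$, $\theta_0=v_1;w$ and $\theta_2=u_2$, all total and hence in $\T=\Sign$. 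Using the two pushout equalities $\varphi_1^0;u_1=(\DOM\varphi_1\subseteq\Sigma);v_1$ and $\varphi_2^0;u_2=(\DOM\varphi_2\subseteq\Sigma);v_1;w$, a direct check shows these legs satisfy the two displayed cocone equations, so $P_2$ is genuinely a lax $\Sign$-cocone.

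Finally I would verify universality. Given any competing cocone $(\theta_0',\theta_1',\theta_2')$ with apex $\Omega$ over the fence diagram, the universal property of $P_1$ yields a unique $\mu_1\co P_1\ra\Omega$ with $u_1;\mu_1=\theta_1'$ and $v_1;\mu_1=\theta_0'$; the cocone equation for $k=2$ supplies exactly the compatibility $(\DOM\varphi_2\subseteq\Sigma);v_1;\mu_1=\varphi_2^0;\theta_2'$ needed to invoke the universal property of $P_2$, producing a unique $\mu\co P_2\ra\Omega$ with $w;\mu=\mu_1$ and $u_2;\mu=\theta_2'$. A short chase gives $\theta_k;\mu=\theta_k'$ for $k=0,1,2$, and the two uniqueness clauses combine to force uniqueness of $\mu$; as $\mu$ is total it lies in $\T$, and since $[{\_}]$ is a faithful functor these identities hold equally in $p\Sign$. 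For the weak variant, replacing ``pushout'' by ``weak pushout'' throughout drops only the uniqueness of $\mu_1$ and $\mu$, yielding weak lax $\Sign$-pushouts. I expect the sole real obstacle to be the reduction in the first step — seeing that compatibility with a total leg $\theta_0$ turns the laxity into strict commutativity; everything afterwards is the standard ``colimit from two pushouts'' argument.
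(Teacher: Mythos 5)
Your proof is correct, and at its core it rests on the same idea as the paper's: reduce the universal property in $p\Sign$ to ordinary pushouts in $\Sign$ built over the spans $(\varphi_k^0,\ \DOM\varphi_k \subseteq \Sigma_0)$, exploiting the fact that all legs of a lax $\Sign$-cocone are total. But the assembly is genuinely different. You isolate an explicit reduction lemma: since the apex legs are total, the inequalities $\varphi_k;[\theta_k]\leq[\theta_0]$ unfold via Dfn.~\ref{partial-def} into the strict equations $\varphi_k^0;\theta_k=(\DOM\varphi_k\subseteq\Sigma_0);\theta_0$, so lax $\Sign$-cocones over the span are exactly $\Sign$-cocones over the fence $\Sigma_1 \xleftarrow{\varphi_1^0} \DOM\varphi_1 \subseteq \Sigma_0 \supseteq \DOM\varphi_2 \xrightarrow{\varphi_2^0} \Sigma_2$, and lax $\Sign$-pushouts are exactly colimits of this fence; you then compute that colimit with \emph{two} pushouts taken sequentially ($\Sigma_1$ glued onto $\Sigma_0$, then $\Sigma_2$ glued onto the result). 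The paper instead uses \emph{three} pushouts in a symmetric arrangement (diagram \eqref{pushout-proof-diag}): one pushout attaching each $\Sigma_k$ to $\Sigma_0$ for $k=1,2$, followed by a pushout of the two resulting legs $(\alpha_1,\alpha_2)$; there the collapse of laxity to strictness is not stated as a separate lemma but happens implicitly inside the diagram chase. Your route is somewhat more economical (one fewer pushout, one fewer layer of mediating arrows in the universality argument) and makes the conceptual crux --- totality of the apex legs is what kills the laxity --- explicit and reusable. The paper's symmetric arrangement keeps the two legs of the span interchangeable, which is the shape that extends most mechanically to lax $\Sign$-colimits of arbitrary diagrams, as the paper claims just before the proposition. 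Your treatment of the weak case is also sound: the compatibility needed to invoke the second weak pushout follows from the cocone equations alone (any mediating arrow $\mu_1$ satisfies $v_1;\mu_1=\theta_0'$), so no uniqueness is used where none is available.
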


\begin{proof}
The proof for the weak case is obtained from the proof of the non-weak
case by discarding the uniqueness properties.  
We will therefore consider here only the non-weak case. 
 
We consider a span $\varphi_k \co \Sigma_0 \to \Sigma_k$, $k=1,2$ of
partial $\Sign$-morphisms. 
Then 
\begin{enumerate}

\item (in $\Sign$) we consider pushout cocones $(\alpha_k,\chi_k)$ for
  the two spans $(\varphi^0_k,\DOM \varphi_k \subseteq \Sigma_0)$, $k=1,2$
  (see diagram \eqref{pushout-proof-diag} below); 

\item (in $\Sign$) we consider a pushout cocone $(\beta_1,\beta_2)$ for
  the span $(\alpha_1,\alpha_2)$;

\item for $k=1,2$ we define $\theta_k^0 = \chi_k; \beta_k$ and we also
  define $\theta^0_0 = \alpha_k ; \beta_k$. 

\end{enumerate}
\begin{equation}\label{pushout-proof-diag}
\xymatrix{
  & & \Sigma' & & \\
  & & \Sigma \ar[u]^(.35){\mu^0} & & \\
\Sigma_1 \ar@{.>}@/^{2pc}/[uurr]^{\gamma^0_1}
\ar@/^{.5pc}/[urr]^{\theta^0_1} \ar@{.>}[r]_{\chi_1}
& \Sigma'_1 \ar@{.>}[ur]|-{\beta_1} \ar@{.>}@/^{.5pc}/[uur]|(.6){\delta_1} &  & 
  \Sigma'_2 \ar@{.>}[ul]|-{\beta_2}
  \ar@{.>}@/_{.5pc}/[uul]|(.6){\delta_2} & 
\Sigma_2  \ar@{.>}@/_{2pc}/[uull]_{\gamma^0_2}
\ar@/_{.5pc}/[ull]_{\theta^0_2} \ar@{.>}[l]^{\chi_2} \\
& \DOM\varphi_1 \ar[ul]^{\varphi^0_1} \ar[r]_-\subseteq & 
\Sigma_0 \ar@{.>}[ul]^{\alpha_1} \ar@{.>}[ur]_{\alpha_2}
\ar[uu]^{\theta^0_0}
\ar@{.>}@/_{1pc}/[uuu]|(.35){\gamma^0_0}
& \DOM\varphi_2 \ar[ur]_{\varphi^0_2} \ar[l]^-\subseteq &  
}
\end{equation}
It follows that, in $p\Sign$,
$(\theta_0 = [\theta^0_0],\theta_1 = [\theta^0_1],\theta_2 =
[\theta^0_2])$ constitutes a lax cocone  for the span
$(\varphi_1,\varphi_2)$ (see diagram \eqref{pushout-proof-diag}).  

Now we consider a lax $\Sign$-cocone
$(\gamma_0 = [\gamma^0_0],\gamma_1 = [\gamma^0_1],\gamma_2 =
[\gamma^0_2])$ for the same span.    
It follows that for $k = 1,2$, $(\gamma^0_k,\gamma^0_0)$ is a cocone
for the span $(\varphi^0_k,\DOM \varphi_k \subseteq \Sigma_0)$.
By the pushout property in $\Sign$, for $k=1,2$ there exists an unique
$\delta_k \co \Sigma'_k \to \Sigma'$ such that 
$\chi_k ; \delta_k = \gamma^0_k$ and $\alpha_k ; \delta_k =
\gamma^0_0$. 
This yields $(\delta_1,\delta_2)$ a cocone in $\Sign$ for the span
$(\alpha_1,\alpha_2)$.  
By the pushout property in $\Sign$ for the span $(\alpha_1,\alpha_2)$
there exists an unique $\mu^0 \co \Sigma \to \Sigma'$ such that for
$k=1,2$, $\beta_k ; \mu^0 = \delta_k$. 

By chasing diagram \eqref{pushout-proof-diag} we have that,
for $k=1,2$
\begin{equation}\label{pushout-proof-eq1}
\theta^0_0 ; \mu^0 = \alpha_k ; \beta_k ; \mu^0 = 
\alpha_k ;\delta_k = \gamma^0_0
\end{equation}
and 
\begin{equation}\label{pushout-proof-eq2}
\theta^0_k;\mu^0 = \chi_k ; \beta_k ; \mu^0 = 
\chi_k ; \delta_k = \gamma^0_k.
\end{equation}
Let $\mu = [\mu^0]$.
From \eqref{pushout-proof-eq1} and \eqref{pushout-proof-eq2} we obtain
that (in $p\Sign$) $\theta_k ; \mu = \gamma_k$, $k = 0,1,2$. 
The uniqueness of $\mu$ follows from the uniqueness side of the
pushout properties involved. 
\end{proof}

\section{Sentences, models and satisfaction with partial signature
  morphisms} 

In this section we complete the development of $\3/2$-institutions on
the basis of the results of the previous section. 
Therefore here the underlying technical assumption is that the
category of signatures $\Sign$ is endowed with an inclusion system
such that it has pullbacks of semi-inclusive cospans. 

\subsection{The sentence functor $p\Sen$}

The following construction represents and extension of the sentence
functor $\Sen$ of a base institution to a $\3/2$-institution
theoretic sentence functor $p\Sen$. 

\begin{definition}\label{sen-dfn}
Given an inclusive functor $\Sen \co \Sign \to \Set$, for each
partial signature $\Sign$-morphism $\varphi\in p\Sign$ we 
define a partial function $p\Sen(\varphi) \co \Sen(\dom{\varphi}) \pto 
\Sen(\cod{\varphi})$  by letting 
\begin{itemize}%\smallitems

\item 
$\DOM \ p\Sen(\varphi) = \Sen(\DOM \varphi)$ and 

\item for each  $\rho \in \DOM \ p\Sen(\varphi)$, 
$p\Sen(\varphi)\rho = \Sen(\varphi^0)\rho$.  

\end{itemize}
\end{definition}

\begin{proposition}\label{oplax-sen-prop}
Dfn.~\ref{sen-dfn} gives a oplax $\3/2$-functor 
$p\Sen \co p\Sign \ra \Setp$.  
\end{proposition}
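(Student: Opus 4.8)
The plan is to check the four conditions that together make $p\Sen$ an oplax $\3/2$-functor into $\Setp$: that each $p\Sen(\varphi)$ is a well-defined partial function, that $p\Sen$ is monotone on hom-sets, that it respects identities up to the oplax inequality, and that it is oplax on composites. On objects I set $p\Sen(\Sigma)=\Sen(\Sigma)$. The single tool used throughout is the inclusiveness of $\Sen$: since the abstract inclusions of the standard inclusion system of $\Set$ are the set-theoretic inclusions, $\Sen$ sends each abstract inclusion $\Sigma_0\subseteq\Sigma$ of $\Sign$ to a genuine set inclusion $\Sen(\Sigma_0)\subseteq\Sen(\Sigma)$. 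In particular $\Sen(\DOM\varphi)\subseteq\Sen(\dom{\varphi})$, so the graph of $p\Sen(\varphi)$, being $\Sen(\varphi^0)$ restricted to $\Sen(\DOM\varphi)$, is indeed a partial function $\Sen(\dom{\varphi})\pto\Sen(\cod{\varphi})$.

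For monotonicity, given $\varphi\leq\theta$ --- so $\DOM\varphi\subseteq\DOM\theta$ and $\varphi^0=(\DOM\varphi\subseteq\DOM\theta);\theta^0$ --- inclusiveness yields $\DOM\,p\Sen(\varphi)=\Sen(\DOM\varphi)\subseteq\Sen(\DOM\theta)=\DOM\,p\Sen(\theta)$, and for $\rho\in\Sen(\DOM\varphi)$ functoriality together with the fact that $\Sen(\DOM\varphi\subseteq\DOM\theta)$ is a set inclusion (so it leaves $\rho$ fixed) gives $p\Sen(\varphi)\rho=\Sen(\theta^0)\rho=p\Sen(\theta)\rho$. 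Hence $p\Sen(\varphi)\subseteq p\Sen(\theta)$, which is the $\3/2$-functor condition. The identity law is immediate: the identities of $p\Sign$ are those of $\Sign$ and $\DOM\,1_\Sigma=\Sigma$, so $p\Sen(1_\Sigma)=\Sen(1_\Sigma)=1_{\Sen(\Sigma)}$ and the required inequality $p\Sen(1_\Sigma)\subseteq 1_{\Sen(\Sigma)}$ holds with equality.

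The substance is the oplax composition law $p\Sen(\varphi;\varphi')\subseteq p\Sen(\varphi);p\Sen(\varphi')$ for $\varphi\co\Sigma\pto\Sigma'$ and $\varphi'\co\Sigma'\pto\Sigma''$, and this is where the pullback square $(\Diamond)$ of diagram \eqref{diag-comp}, supplied by Lemma \ref{pullback-lem}, enters. I would first compare domains. The composite on the right, taken in $\Setp$, is defined exactly on those $\rho\in\Sen(\DOM\varphi)$ with $\Sen(\varphi^0)\rho\in\Sen(\DOM\varphi')$, while the left side is defined on $\Sen(\DOM(\varphi;\varphi'))$. Applying the functor $\Sen$ to the commuting square $(\Diamond)$, namely $(\DOM(\varphi;\varphi')\subseteq\DOM\varphi);\varphi^0=(\varphi^0)';(\DOM\varphi'\subseteq\Sigma')$, and using that $\Sen$ turns both inclusions into set inclusions, I would conclude that for $\rho\in\Sen(\DOM(\varphi;\varphi'))$ one has $\Sen(\varphi^0)\rho=\Sen((\varphi^0)')\rho$, an element of $\Sen(\DOM\varphi')$; hence $\Sen(\DOM(\varphi;\varphi'))$ is contained in the domain of the right-hand composite. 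On this common domain the two partial functions agree, since $(\varphi;\varphi')^0=(\varphi^0)';\varphi'^0$ and functoriality give $p\Sen(\varphi;\varphi')\rho=\Sen(\varphi'^0)(\Sen((\varphi^0)')\rho)=\Sen(\varphi'^0)(\Sen(\varphi^0)\rho)=(p\Sen(\varphi);p\Sen(\varphi'))\rho$. A smaller domain with agreement on it is precisely the inclusion $p\Sen(\varphi;\varphi')\subseteq p\Sen(\varphi);p\Sen(\varphi')$.

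The main obstacle is this last domain comparison, together with the realisation that it goes only one way in general. There may be sentences over $\DOM\varphi$ whose $\varphi^0$-translation lands in $\Sen(\DOM\varphi')$ without those sentences coming from the pullback $\DOM(\varphi;\varphi')$, so equality of domains cannot be expected and the functor is genuinely oplax rather than strict --- the reverse inclusion would need a preservation property of $\Sen$ with respect to the semi-inclusive pullback that is not assumed here. In the concrete instances $\PL$ and $\MSA$, where sentence translation merely renames symbols and preserves all of them, that reverse inclusion does hold and $p\Sen$ is in fact strict, in agreement with Examples \ref{3/2-pl-ex} and \ref{3/2-msa-ex}.
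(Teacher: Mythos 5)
Your proof is correct and follows essentially the same route as the paper's: the substance in both is the oplax composition law, established by applying $\Sen$ to the pullback square $(\Diamond)$ of Dfn.~\ref{partial-def} to get the one-way domain inclusion via inclusiveness of $\Sen$, and then the value agreement $p\Sen(\varphi;\varphi')\rho=(p\Sen(\varphi);p\Sen(\varphi'))\rho$ by functoriality. You are in fact slightly more complete than the paper, which treats the identity law as immediate and leaves the hom-order preservation (your monotonicity step) implicit; your closing remark on why strictness fails in general and how Cor.~\ref{lax-sen-cor} repairs it is also accurate.
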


\begin{proof}
Note that $\Sen$ and $p\Sen$ are the same on the signatures, they
differ only on the signature morphisms. 
The oplax property of $p\Sen$ on the identities is rather immediate;
in fact it holds in the strict form $p\Sign (1_\Sigma) =
1_{\Sen(\Sigma)}$. 

Let us now focus on proving that 
\begin{equation}\label{funct-equation}
p\Sen(\varphi;\varphi') \subseteq p\Sen(\varphi);p\Sen(\varphi').
\end{equation}

We consider $\rho\in \DOM \ p\Sen(\varphi;\varphi')$ which by
Dfn.~\ref{sen-dfn} means $\rho\in \Sen(\DOM \ \varphi;\varphi')$. 
\begin{itemize}

\item Since by Dfn.~\ref{partial-def} we have that $\DOM \
  \varphi;\varphi' \subseteq \DOM \varphi$ and because $\I$ is
  inclusive, we get that $\rho\in \Sen(\DOM \varphi) =
  \DOM \ p\Sen(\varphi)$.  

\item By the commutativity of the square ($\Diamond$) of
  Dfn.~\ref{partial-def} we have that $\Sen(\varphi^0)\rho =
  \Sen({\varphi^0}')\rho \in \Sen(\DOM \varphi') = \DOM \
  p\Sen(\varphi')$.   

\end{itemize}
This means $\rho \in \DOM \ p\Sen(\varphi);p\Sen(\varphi')$. 
Hence $\DOM \ p\Sen(\varphi;\varphi') \subseteq \DOM \
p\Sen(\varphi);p\Sen(\varphi')$. 

For any $\rho\in \Sen(\DOM \varphi;\varphi') = \DOM \
p\Sen(\varphi;\varphi')$  we have the following:
\[
\begin{array}{rll}
p\Sen(\varphi;\varphi')\rho = &
  \Sen((\varphi;\varphi')^0)\rho 
  & \quad \text{by the definition of }p\Sen(\varphi;\varphi')\\[.2em]
= & \Sen({\varphi^0}';\varphi'^0)\rho 
  & \quad \text{by the definition of }(\varphi;\varphi')^0 
     \text{ cf. diagram \eqref{diag-comp}}\\[.2em]
= & \Sen(\varphi'^0)(\Sen({\varphi^0}')\rho) 
  & \quad \text{by the functoriality of }\Sen \\[.2em]
= & \Sen(\varphi'^0)(\Sen(\varphi^0)\rho) 
  & \quad \text{by applying }\Sen \text{ to the square } (\Diamond)  
     \text{ of Dfn.~\ref{partial-def}} \\[.2em]
= & p\Sen(\varphi')(p\Sen(\varphi)\rho) 
  & \quad \text{by the definition of }p\Sen(\varphi), p\Sen(\varphi')\\[.2em]
= & (p\Sen(\varphi);p\Sen(\varphi'))\rho. 
  &
\end{array}
\]
This concludes the proof of \eqref{funct-equation} and of the
proposition. 
\end{proof}

In many concrete situations of interest in fact the
sentence$\3/2$-functor $p\Sen$ is strict.
The following result gives a widely applicable general condition for
that.

\begin{corollary}\label{lax-sen-cor}
If $\Sen$ maps each pullback square of semi-inclusive cospans 
% \[
% \xymatrix{
% \bullet \ar[r] \ar[d]_\subseteq & \bullet \ar[d]^\subseteq \\
% \bullet \ar[r] & \bullet
% }
% \]
to a weak pullback square, then $p\Sign$ is a strict $\3/2$-functor. 
\end{corollary}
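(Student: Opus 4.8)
The plan is to build directly on Proposition~\ref{oplax-sen-prop}. That result already supplies the oplax inclusion $p\Sen(\varphi;\varphi') \subseteq p\Sen(\varphi);p\Sen(\varphi')$, and moreover the chain of equalities at the end of its proof shows that the two partial functions agree value-by-value on the domain $\DOM\, p\Sen(\varphi;\varphi') = \Sen(\DOM(\varphi;\varphi'))$. Since the identity case is already strict, upgrading $p\Sen$ from oplax to strict reduces entirely to proving that the two \emph{definition domains} coincide; the agreement of values on the common domain then comes for free from Proposition~\ref{oplax-sen-prop}.

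First I would spell out the two domains explicitly. One side is $\DOM\, p\Sen(\varphi;\varphi') = \Sen(\DOM(\varphi;\varphi'))$. For the other side, unfolding the composition of partial functions in $\Setp$ gives
\[
\DOM\bigl(p\Sen(\varphi);p\Sen(\varphi')\bigr) = \{\rho \in \Sen(\DOM\varphi) \mid \Sen(\varphi^0)\rho \in \Sen(\DOM\varphi')\}.
\]
The oplax inclusion already delivers $\subseteq$ between these sets, so the whole content of the corollary is the reverse inclusion: every $\rho \in \Sen(\DOM\varphi)$ with $\Sen(\varphi^0)\rho \in \Sen(\DOM\varphi')$ must in fact lie in $\Sen(\DOM(\varphi;\varphi'))$.

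The key step, and the only place where the hypothesis is genuinely used, is to apply $\Sen$ to the defining pullback square $(\Diamond)$ of Dfn.~\ref{partial-def}. By assumption this yields a \emph{weak} pullback square in $\Set$; and because $\Sen$ is inclusive while the inclusion system of $\Set$ consists of set-theoretic inclusions, the two vertical legs $\Sen(\DOM(\varphi;\varphi') \subseteq \DOM\varphi)$ and $\Sen(\DOM\varphi' \subseteq \Sigma')$ become genuine set inclusions. Given $\rho$ as above, the element $\Sen(\varphi^0)\rho$ lies in the subset $\Sen(\DOM\varphi')$ of $\Sen(\Sigma')$, so the pair $(\rho,\, \Sen(\varphi^0)\rho)$ is a cone over the cospan $\bigl(\Sen(\varphi^0),\, \Sen(\DOM\varphi' \subseteq \Sigma')\bigr)$. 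The weak pullback property then provides a witness $\tau \in \Sen(\DOM(\varphi;\varphi'))$ whose image along the left leg equals $\rho$; since that leg is a set-theoretic inclusion (hence injective), this forces $\rho = \tau \in \Sen(\DOM(\varphi;\varphi'))$, as required.

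The delicate point is precisely this last paragraph: without $\Sen$ sending $(\Diamond)$ to a (weak) pullback, there is no reason a preimage of $\rho$ should exist, the reverse domain inclusion can fail, and $p\Sen$ stays merely oplax. I expect this to be the main obstacle, though it is a mild one, and I note that \emph{weakness} of the pullback suffices, since only the existence of the mediating $\tau$ — not its uniqueness — is needed. Combining the resulting domain equality with the value agreement inherited from Proposition~\ref{oplax-sen-prop} yields $p\Sen(\varphi;\varphi') = p\Sen(\varphi);p\Sen(\varphi')$, so $p\Sen$ is strict.
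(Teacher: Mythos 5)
Your proof is correct and follows essentially the same route as the paper's: both reduce, via Proposition~\ref{oplax-sen-prop}, to establishing the reverse inclusion of definition domains (equivalently, laxness of $p\Sen$), and both obtain it by applying $\Sen$ to the pullback square $(\Diamond)$ of Dfn.~\ref{partial-def} and using the weak pullback hypothesis to produce a preimage in $\Sen(\DOM(\varphi;\varphi'))$, which must equal $\rho$ because the left leg is an inclusion. Your observation that weakness suffices (only existence, not uniqueness, of the mediating element is needed) and that the value agreement is inherited from the oplax computation matches the paper's closing remark that the rest of the argument repeats the proof of Proposition~\ref{oplax-sen-prop}.
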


\begin{proof}
By Prop.~\ref{oplax-sen-prop} it is enough to prove that $p\Sen$ is
lax. 
Since $p\Sign$ is strict on the identities anyway we prove only that  
\begin{equation}\label{lax-funct-equation}
p\Sen(\varphi);p\Sen(\varphi') \subseteq p\Sen(\varphi;\varphi').
\end{equation}
For this we consider $\rho \in \DOM \ p\Sen(\varphi);p\Sen(\varphi')$. 
This means $\rho \in \DOM \ p\Sen(\varphi) = \Sen(\DOM\varphi)$ and
furthermore that $\Sen(\varphi^0)\rho \in \DOM \ p\Sen(\varphi') =
\Sen(\DOM \varphi')$. 
By the hypothesis when we apply $\Sen$ to the square ($\Diamond$) of
Dfn.~\ref{partial-def} we still get a pullback square, which means
that there exists a sentence in $\Sen(\DOM \varphi;\varphi')$
that gets mapped to $\rho$ by the inclusion $\Sen(\DOM
\varphi;\varphi')\subseteq \Sen(\DOM \varphi)$ and to 
$\Sen(\varphi^0)\rho$ by $\Sen({\varphi^0}')$; of course because of the
inclusion $\Sen(\DOM \varphi;\varphi')\subseteq \Sen(\DOM \varphi)$
this sentence must be $\rho$.
Hence we have just proved that 
$\DOM \ p\Sen(\varphi);p\Sen(\varphi')\subseteq 
\DOM \ p\Sen(\varphi;\varphi')$.
The rest of the argument is similar to the corresponding part from the
proof of Prop.~\ref{oplax-sen-prop}. 
\end{proof}

\begin{example}
\begin{rm}
The sentence functors of both $\3/2 \PL$ and $\3/2 \MSA$ arise as
instances of Dfn.~\ref{sen-dfn}. 
While in both cases the existence of pullbacks in $\Sign$ is easy or
well known (see \cite{iimt,fun3} for the $\MSA$), the assumption on
$\Sen$ being inclusive deserves here a bit of attention.  

While the sentence functor $\Sen^{\PL}$ of $\PL$ (propositional
logic) is obviously inclusive and while for the  definitions of $\MSA$
that take a global approach to quantification variables this is true
as well (such as in
\cite{Goguen-Burstall:Institutions-1992,iimt,sannella-tarlecki-book},
etc.), when considering a local approach to quantifiers (like in 
some more recent publications; see \cite{UniLogCompSci} for an ample
discussion on the issue) the sentence functor is not inclusive
anymore, signature inclusions being mapped to designated injections
that are subject to some coherence properties. 
This is basically due to the fact that in the local approach to
quantification variables these are rather heavily qualified, very much
like in the implementations of specification languages (e.g. CafeOBJ
\cite{caferep}), and for example the qualifications by the signatures
are not preserved by sentence translations along inclusions. 
In such cases the solution has been already formulated above, namely
to weaken the concept of inclusion system to a system of designated
``abstract injections''. 
However for the sake of simplicity of presentation we stick here
with the concept of inclusive functor, but keeping in the mind that
for the situations when this does not really work there is exists
technical remedy.

In both the $\3/2 \PL$ and the $\3/2 \MSA$ cases the sentence functors
are strict, which means they are are also lax $\3/2$-functors.  
This is due to the fact the condition of preservation of pullbacks of
Cor.~\ref{lax-sen-cor} holds both in $\PL$ and $\MSA$. 
In fact it holds often in concrete institutions even in a stronger
form (for all pullbacks \cite{iimt}, exercise 4.19, called there
\emph{weak coamalgamation}).  
We will show how this works in the case of $\PL$, the following
argument being easily replicated to other situations. 

Let us consider a pullback square of a semi-inclusive cospan in
$\Sign^\PL$ (depicted as the square $(*)$ in the diagram below).
\[
\xymatrix @-1.2em{
\Sigma_0 \ar@{.>}[dr]^u \ar@{.>}@/^1em/[drrr]^{\varphi'_0} 
\ar@{.>}@/_1em/[dddr]_\subseteq & & & &
& & & \\
  & \Sigma \ar[dd]_\subseteq \ar[rr]^\varphi &  & \Sigma_1 \ar[dd]^\subseteq
  & & & & \Sen(\Sigma) \ar[rr]^{\Sen(\varphi)} \ar[dd]_{\subseteq} & &
  \Sen(\Sigma_1) \ar[dd]^{\subseteq} \\
  & & (*) & & & & & & (**) & \\
  & \Sigma' \ar[rr]_{\varphi'} & & \Sigma'_1 & & & & \Sen(\Sigma')
  \ar[rr]_{\Sen(\varphi')} & & \Sen(\Sigma'_1)
}
\]
That the square $(*)$ is a pullback means that 
$\Sigma = \{ p \in \Sigma' \mid \varphi'(p) \in \Sigma_1 \}$.
That the square $(**)$ is a weak pullback means that for any $\rho_1
\in \Sen(\Sigma_1)$ and $\rho' \in \Sen(\Sigma')$ such that
$\Sen(\varphi')\rho' = \rho_1$ we have that $\rho' \in \Sen(\Sigma)$
and $\Sen(\varphi)\rho' = \rho_1$. 
Note that in this case, because of the inclusions, weak pullback means
just pullback.  

Consider $\sigma_0 \subseteq \Sigma'$ to be the set of symbols
occurring in $\rho'$. 
Because $\Sen(\varphi')\rho' \in \Sen(\Sigma_1)$ it follows that the
restriction of $\varphi'$ to $\Sigma_0$ maps everything to
$\Sigma_1$; we denote it by $\varphi'_0$.
By the pullback property of $(*)$ there exists an unique $u$ such that
$u ; (\Sigma \subseteq \Sigma') = (\Sigma_0 \subseteq \Sigma')$. 
It follows that $u$ is inclusion too. 
Hence $\rho'\in \Sen(\Sigma_0) \subseteq \Sen(\Sigma)$. 
Also $\Sen(\varphi)\rho' = \Sen(\varphi'_0)\rho' = \Sen(\varphi')\rho'
= \rho_1$. 
\end{rm}
\end{example}

\subsection{The model functor $p\Mod$ and the Satisfaction Condition}

\begin{definition}\label{mod-dfn}
Given any functor $\Mod \co \Sign^\varominus \to \CAT$ we define  
\begin{itemize}%\smallitems

\item for each $\Sigma\in |\Sign| \ (= |p\Sign|)$, 
\ \ $p\Mod(\Sigma) = \Mod(\Sigma)$, 

\item for each partial $\Sign$-morphism $\varphi \co \Sigma \pto
  \Sigma'$, 
\[
p\Mod(\varphi)M' = \{ M \in |\Mod(\Sigma)| \ \mid \ \Mod(\DOM \varphi
\subseteq \Sigma)M = \Mod(\varphi^0)M' \}.
\] 

\item $p\Mod(\varphi)$ is defined on the arrows  like on the
  objects.  

\end{itemize}
\end{definition}

\begin{proposition}\label{mod-prop}
$p\Mod$ is a lax functor 
$(p\Sign)^{\varoplus}\rightarrow \3/2 (\CAT_{\!\!\P})$.
\end{proposition}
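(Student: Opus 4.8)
The plan is to verify the three conditions that make $p\Mod$ a lax $\3/2$-functor $(p\Sign)^{\varoplus}\ra \3/2(\CAT_{\!\!\P})$: (a) for each partial $\Sign$-morphism $\varphi\co\Sigma\pto\Sigma'$ the assignment $p\Mod(\varphi)$ is a well-defined lax functor $\Mod(\Sigma')\ra\P\Mod(\Sigma)$, i.e.\ an arrow of $\3/2(\CAT_{\!\!\P})$; (b) $p\Mod$ reverses the order on the hom-sets of $p\Sign$, equivalently preserves it on those of $(p\Sign)^{\varoplus}$; and (c) $p\Mod$ is lax on composition and (strictly) preserves identities. Throughout I would lean on the contravariance of $\Mod$ and on the fact that, $\I$ being a partial order, the composite of two abstract inclusions is again the canonical inclusion.

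For (a) I would first observe that $p\Mod(\varphi)M'$ is by definition a subset of $|\Mod(\Sigma)|$, hence an object of $\P\Mod(\Sigma)$, while the companion assignment on homomorphisms $p\Mod(\varphi)(h')=\{\,h \mid \Mod(\DOM\varphi\subseteq\Sigma)(h)=\Mod(\varphi^0)(h')\,\}$ lands in the correct hom-set, since for $h\co M\ra N$ the defining equation forces $M\in p\Mod(\varphi)M'$ and $N\in p\Mod(\varphi)N'$. Laxness of this functor then reduces to two routine facts: for composable $h'_1,h'_2$, any $g_1;g_2$ with $g_k\in p\Mod(\varphi)(h'_k)$ satisfies $\Mod(\DOM\varphi\subseteq\Sigma)(g_1;g_2)=\Mod(\varphi^0)(h'_1;h'_2)$ because $\Mod(\DOM\varphi\subseteq\Sigma)$ and $\Mod(\varphi^0)$ are genuine functors, yielding $p\Mod(\varphi)(h'_1);p\Mod(\varphi)(h'_2)\subseteq p\Mod(\varphi)(h'_1;h'_2)$; and $1_M\in p\Mod(\varphi)(1_{M'})$ whenever $M\in p\Mod(\varphi)M'$, since reducts preserve identities. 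The identity law is even strict: $p\Mod(1_\Sigma)M=\{M\}$, the Kleisli identity.

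For (b), suppose $\varphi\leq\theta\co\Sigma\pto\Sigma'$, so $\DOM\varphi\subseteq\DOM\theta$ and $\varphi^0=(\DOM\varphi\subseteq\DOM\theta);\theta^0$ by Dfn.~\ref{partial-def}. Given $M\in p\Mod(\theta)M'$, I would factor the inclusion $\DOM\varphi\subseteq\Sigma$ through $\DOM\theta$ and apply $\Mod$ contravariantly to get $\Mod(\DOM\varphi\subseteq\Sigma)M=\Mod(\DOM\varphi\subseteq\DOM\theta)(\Mod(\DOM\theta\subseteq\Sigma)M)=\Mod(\DOM\varphi\subseteq\DOM\theta)(\Mod(\theta^0)M')$, which equals $\Mod(\varphi^0)M'$ exactly because $\varphi^0=(\DOM\varphi\subseteq\DOM\theta);\theta^0$. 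Hence $M\in p\Mod(\varphi)M'$, so $p\Mod(\theta)M'\subseteq p\Mod(\varphi)M'$; the same computation on homomorphisms gives order reversal on the whole hom-set.

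The substance of the proof is (c), the lax composition law $p\Mod(\varphi)(p\Mod(\varphi')M'')\subseteq p\Mod(\varphi;\varphi')M''$, and I expect the direction bookkeeping (contravariance of $\Mod$ combined with the double dual $\varoplus$) to be the main thing to get right, rather than any deep difficulty. Taking $M'\in p\Mod(\varphi')M''$ and $M\in p\Mod(\varphi)M'$, and writing $j$ for the inclusion $\DOM(\varphi;\varphi')\subseteq\DOM\varphi$, I would use the commuting square $(\Diamond)$ of diagram \eqref{diag-comp}, i.e.\ $j;\varphi^0=(\varphi^0)';(\DOM\varphi'\subseteq\Sigma')$ together with $(\varphi;\varphi')^0=(\varphi^0)';\varphi'^0$, to compute $\Mod(\DOM(\varphi;\varphi')\subseteq\Sigma)M=\Mod(j)(\Mod(\varphi^0)M')=\Mod((\varphi^0)')(\Mod(\DOM\varphi'\subseteq\Sigma')M')=\Mod((\varphi^0)')(\Mod(\varphi'^0)M'')=\Mod((\varphi;\varphi')^0)M''$, invoking $M\in p\Mod(\varphi)M'$, the commutativity of $(\Diamond)$, and $M'\in p\Mod(\varphi')M''$, in that order. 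Thus $M\in p\Mod(\varphi;\varphi')M''$. I would stress that only the commutativity of $(\Diamond)$ is used here, never its universal (pullback) property, which is precisely why $p\Mod$ is genuinely lax rather than strict: obtaining equality would require $\Mod$ to carry $(\Diamond)$ to a weak pullback, in the spirit of Cor.~\ref{lax-sen-cor}.
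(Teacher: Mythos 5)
Your proposal is correct and takes essentially the same route as the paper's own proof: both verify laxness of each reduct $p\Mod(\varphi)$ via functoriality of $\Mod(\DOM\varphi \subseteq \dom{\varphi})$ and $\Mod(\varphi^0)$, order reversal by factoring the inclusion $\DOM\varphi \subseteq \dom{\varphi}$ through $\DOM\theta$, and the lax composition law by chasing the commutativity of the square $(\Diamond)$ of diagram \eqref{diag-comp}, your equality chains merely being run in the opposite direction from the paper's. Your closing remark---that only the commutativity, never the universal property, of $(\Diamond)$ is needed, which is exactly why $p\Mod$ is lax rather than strict (cf.\ Cor.~\ref{lax-sen-cor})---is an accurate gloss consistent with the paper.
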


\begin{proof}
First we show that for each partial $\Sign$-morphism $\varphi\co
\Sigma \to \Sigma'$, the model reduct $\3/2$-functor
$p\Mod(\varphi) \co \Mod(\Sigma') \to \P \Mod(\Sigma)$ is lax. 
We denote the inclusion $\DOM\varphi \subseteq \Sigma$ by $d_\varphi$.

For any model homomorphisms $f',g'\in p\Mod(\Sigma')$ with $\cod{f'}=
\dom{g'}$ the homomorphisms in $p\Mod(\varphi)f' ; \Mod(\varphi)g'$
are $f;g$ with $f,g\in \Mod(\Sigma)$ such that $\cod{f} = \dom{g}$, 
$\Mod(d_\varphi)f = \Mod(\varphi^0)f'$, and 
$\Mod(d_\varphi)g = \Mod(\varphi^0)g'$.  
The calculation 
\[
\begin{array}{rll}
\Mod(d_\varphi)(f;g) = 
  & \Mod(d_\varphi)(f);
    \Mod(d_\varphi)(g)
  & \quad \text{by the functoriality of } \Mod(d_\varphi) \\[.2em] 
= & \Mod(\varphi^0)f' ; \Mod(\varphi^0)g'
  & \quad \text{since } f\in p\Mod(\varphi)f' \text{ and } g\in
  p\Mod(\varphi)g' \\[.2em]
= & \Mod(\varphi^0)(f';g')
  & \quad \text{by the functoriality of }\Mod(\varphi^0)
\end{array}
\]
shows that $f;g\in p\Mod(\varphi)(f';g')$.
Hence  $p\Mod(\varphi)f' ; \Mod(\varphi)g' \subseteq
p\Mod(\varphi)(f';g')$. 

Now we show that $p\Mod$ is a lax $\3/2$-functor. 
\begin{itemize}

\item Let $\varphi \leq \theta$ be partial $\Sign$-morphisms, which
  means the diagram below commutes:
\[
\xymatrix{
\dom{\varphi} \ar@{=}[d]  & \DOM \varphi \ar[l]_\subseteq
\ar[r]^{\varphi^0} \ar[d]_\subseteq & \cod{\varphi} \ar@{=}[d] \\
\dom{\theta}  & \DOM\theta\ar[r]_{\theta^0} \ar[l]^\subseteq &
\cod{\theta} 
}
\]
For any $M \in p\Mod(\theta)M'$ we have that 
\[
\begin{array}{rll}
\Mod(\varphi^0)M' = & \Mod(\DOM \varphi \subseteq \DOM \theta) (\Mod(\theta^0)M')
  & \\[.2em]
= & \Mod(\DOM \varphi \subseteq \DOM \theta)(\Mod(\DOM \theta
    \subseteq \dom{\theta})M) & \\[.2em]
= & \Mod(\DOM \varphi \subseteq \dom{\varphi})M. & 
\end{array}
\]
Hence $M \in p\Mod(\varphi)M'$ which shows that $p\Mod(\theta)M'
\subseteq p\Mod(\varphi)M'$. 

\item
We consider partial $\Sign$-morphisms $\varphi$, $\varphi'$ such that
$\cod{\varphi} = \dom{\varphi'}$ and consider 
$M \in p\Mod(\varphi)(p\Mod(\varphi')M'')$. 
This means that there exists $M'$ such that $M \in p\Mod(\varphi)M'$
and $M' \in p\Mod(\varphi')M''$. 
We have that 
\[
\hspace{-2em}
\begin{array}{rll}
\Mod((\varphi;\varphi')^0)M'' = & \Mod((\varphi^0)' ; \varphi'^0)M'' &
  \quad \text{from diagram \eqref{diag-comp}}\\[.2em]
= & \Mod({\varphi^0}')(\Mod(\varphi'^0)M'') & \quad \text{by the
                                            functoriality of } \Mod \\[.2em]
= & \Mod({\varphi^0}')(\Mod(\DOM \varphi \subseteq \dom{\varphi'})M'') &
  \quad \text{since }M' \in p\Mod(\varphi')M' \\[.2em]
= & \Mod({\varphi^0}';(\DOM \varphi \subseteq \dom{\varphi'}))M' & 
\quad \text{by the functoriality of }\Mod \\[.2em]
= & \Mod((\DOM\varphi;\varphi' \subseteq
    \DOM\varphi);\varphi^0)M') & \quad \text{from diagram
                                 \eqref{diag-comp}} \\[.2em] 
= & \Mod(\DOM\varphi;\varphi' \subseteq
    \DOM\varphi)(\Mod(\varphi^0)M') & \quad \text{by the
                                            functoriality of } \Mod \\[.2em]
= & \Mod(\DOM\varphi;\varphi' \subseteq
    \DOM\varphi)(\Mod(\DOM \varphi \subseteq \dom{\varphi})M) & 
  \quad \text{since } M \in \Mod(\varphi)M' \\[.2em]
= & \Mod((\DOM\varphi;\varphi' \subseteq
    \DOM\varphi);(\DOM \varphi \subseteq \dom{\varphi}))M & 
  \quad \text{by the functoriality of } \Mod \\[.2em]
= & \Mod(\DOM\varphi;\varphi' \subseteq \dom{\varphi;\varphi'})M. & 
\end{array}
\]
This shows that $M\in \Mod(\varphi;\varphi')M''$, hence
$p\Mod(\varphi');p\Mod(\varphi) \leq p\Mod(\varphi;\varphi')$. 

\item We know that the identities in $p\Sign$ are $[1_\Sigma]$ where
  $1_\Sigma$ is an identity in $\Sign$. 
  From the definition, it follows that 
  $p\Mod([1_\Sigma])M = \{ M \}$, hence $M \in p\Mod([1_\Sigma])M$.  

\end{itemize}
\end{proof}

\begin{example}
\begin{rm}
Both the model functors of $\3/2 \PL$ and of $\3/2 \MSA$ arise as
instances of Dfn.~\ref{mod-dfn} (as $p\Mod$, where $\Mod$ is the model
functor of $\PL$ and $\MSA$, respectively). 
The same holds for the model functors of the $\3/2$-subinstitutions of
$\MSA$ discussed in Ex.~\ref{3/2-submsa-ex}.
\end{rm}
\end{example}

Now we are able to define a $\3/2$-institution on the basis of any
institution (with its category of signatures satisfying the required
technical assumptions) by putting together the results of the
Propositions \ref{oplax-sen-prop} and \ref{mod-prop}. 
Since both the sentences and the models of the $\3/2$-institution are
exactly those of the base institution, the satisfaction relation of
the $\3/2$-institution is inherited from the base institution.  

\begin{corollary}
For any institution $\I = (\Sign,\Sen,\Mod,\models)$ such that 
\begin{itemize}

\item $\Sign$ is endowed with an inclusion system, 

\item $\Sign$ has pullbacks of semi-inclusive cospans, and  

\item $\Sen$ is an inclusive functor, 

\end{itemize}
$\3/2 \I = (p\Sign,p\Sen,p\Mod,\models)$ is an oplax
$\3/2$-institution. 
\end{corollary}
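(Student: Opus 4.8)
The plan is to note first that all the structural ingredients of an oplax $\3/2$-institution are already in place: $p\Sign$ is a $\3/2$-category by the earlier Proposition establishing this fact, $p\Sen \co p\Sign \ra \Setp$ is an oplax $\3/2$-functor by Prop.~\ref{oplax-sen-prop}, and $p\Mod \co (p\Sign)^{\varoplus} \ra \3/2 (\CAT_{\!\!\P})$ is a lax $\3/2$-functor by Prop.~\ref{mod-prop}. Since the sentences and the models of $\3/2\I$ are literally those of $\I$, each family $\models_\Sigma$ is well defined as a relation on $|\Mod(\Sigma)| \times \Sen(\Sigma)$. Thus the only thing that remains to be checked is the Satisfaction Condition~\eqref{sat-cond-eq}.

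To verify it, I would fix a partial $\Sign$-morphism $\varphi \co \Sigma \pto \Sigma'$, a model $M' \in |\Mod(\Sigma')|$, a reduct $M \in p\Mod(\varphi)M'$, and a sentence $\rho \in \DOM(p\Sen(\varphi)) = \Sen(\DOM \varphi)$. The key observation is that $\varphi$ factors through the inclusion $d_\varphi \co \DOM \varphi \subseteq \Sigma$ and the total morphism $\varphi^0 \co \DOM \varphi \ra \Sigma'$, and that by Dfn.~\ref{mod-dfn} the membership $M \in p\Mod(\varphi)M'$ says precisely that the two one-step reducts coincide:
\[
M_0 \ := \ \Mod(\varphi^0)M' \ = \ \Mod(d_\varphi)M .
\]
This single shared $\DOM\varphi$-model $M_0$ is what glues the whole argument together.

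The remainder is then two applications of the Satisfaction Condition of the base institution $\I$, one for each leg of the factorisation, spliced at $M_0$. Applying it to the total morphism $\varphi^0$ gives $M' \models_{\Sigma'} \Sen(\varphi^0)\rho$ iff $M_0 \models_{\DOM\varphi} \rho$; applying it to the inclusion $d_\varphi$ gives $M \models_{\Sigma} \rho$ iff $M_0 \models_{\DOM\varphi} \rho$, where one uses that $\Sen$ is inclusive so that $\Sen(d_\varphi)\rho = \rho$. Chaining these two equivalences through the common term $M_0 \models_{\DOM\varphi} \rho$, and using $p\Sen(\varphi)\rho = \Sen(\varphi^0)\rho$ from Dfn.~\ref{sen-dfn}, yields exactly $M' \models_{\Sigma'} p\Sen(\varphi)\rho$ iff $M \models_{\Sigma} \rho$, which is~\eqref{sat-cond-eq}. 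I do not anticipate a genuine obstacle here: the entire content is the recognition that $M \in p\Mod(\varphi)M'$ is the statement that the $\varphi^0$-reduct of $M'$ and the $d_\varphi$-reduct of $M$ agree, after which the base Satisfaction Condition does all the work at the level of $\DOM\varphi$.
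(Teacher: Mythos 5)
Your proposal is correct and follows essentially the same route as the paper: both reduce the claim to the Satisfaction Condition via Propositions~\ref{oplax-sen-prop} and~\ref{mod-prop}, then establish it by splicing two applications of the base institution's Satisfaction Condition (one along $\varphi^0$, one along the inclusion $\DOM\varphi \subseteq \dom{\varphi}$) at the common $\DOM\varphi$-model $\Mod(\varphi^0)M' = \Mod(\DOM\varphi \subseteq \dom{\varphi})M$, using inclusiveness of $\Sen$ to identify $\Sen(\DOM\varphi \subseteq \dom{\varphi})\rho$ with $\rho$. The paper presents this as a single chain of five equivalences, but the content is identical to your two-legged factorisation argument.
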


\begin{proof}
By the virtue of the Propositions \ref{oplax-sen-prop} and
\ref{mod-prop} it only remains to show the Satisfaction Condition for  
$\3/2 \I$. 
We consider $\varphi \in p\Sign$, $\rho\in \DOM \ p\Sen(\varphi)$, and
models $M,M'$ such that $M \in p\Mod(\varphi)M'$. 
Then
\[
\begin{array}{rll}
M' \models_{\cod{\varphi}} p\Sen(\varphi)\rho \quad \text{if and only if}
   &
   M'\models_{\cod{\varphi}} \Sen(\varphi^0)\rho & \quad \text{by definition
                                                   of } p\Sen \\[.2em]
\text{if and only if} & \Mod(\varphi^0)M' \models_{\DOM \varphi} \rho & 
  \quad \text{by the Satisfaction Condition in } \I \\[.2em]
\text{if and only if} & \Mod(\DOM \varphi \subseteq \dom{\varphi})M
                        \models_{\DOM \varphi} 
    \rho & \quad \text{by definition of  } p\Mod(\varphi)M' \\[.2em]
\text{if and only if} & M \models_{\dom{\varphi}} \Sen(\DOM \varphi
                        \subseteq 
    \dom{\varphi})\rho & \quad \text{by the Satisfaction Condition in
                         } \I \\[.2em]
\text{if and only if} & M \models_{\dom{\varphi}} \rho & 
      \quad \text{since }\Sen \text{ is inclusive.} 
\end{array}
\]
\end{proof}

\section{Properties of $\3/2 \I$}

In this section we determine some properties of $\3/2 \I$ that are
significant within the context of the general theory of
$\3/2$-institutions and its envisaged applications as developed in
\cite{3/2Inst}.  

\subsection{Total signature morphisms} 

In \cite{3/2Inst} a couple of complementary concepts, one of syntactic
and the other of semantic nature, have been introduced in order to
reflect abstractly the situation when a signature morphism is total.  
These concepts have been applied in \cite{3/2Inst} for deriving
crucial properties on colimits and model amalgamation. 
 
A signature morphism $\varphi$ in a $\3/2$-institution
$(\Sign,\Sen,\Mod,\models)$ is 
\begin{itemize}

\item \emph{$\Sen$-maximal} when $\Sen(\varphi)$ is total;

\item \emph{$\Mod$-maximal} when for each $\cod{\varphi}$-model $M'$,
  $\Mod(\varphi)M'$ is a singleton; and 

\item \emph{total} when it is both $\Sen$-maximal and $\Mod$-maximal. 

\end{itemize}
The following is an expected straightforward property. 

\begin{fact}
For any signature morphism $\chi$ in $\I$, $[\chi]$ is total in 
$\3/2 \I$.  
\end{fact}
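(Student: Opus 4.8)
The plan is to check the two defining conditions of totality directly from the definitions, the entire argument resting on one observation about the embedding $[\_]$ of Fact~\ref{embed-fact}: for a morphism $\chi \co \Sigma \ra \Sigma'$ of $\Sign$ we have $[\chi]^0 = \chi$, whose domain is all of $\Sigma$. Hence $\DOM [\chi] = \Sigma = \dom{[\chi]}$, so the abstract inclusion $\DOM [\chi] \subseteq \dom{[\chi]}$ that figures in Definitions~\ref{sen-dfn} and~\ref{mod-dfn} is here the identity $1_\Sigma$. I would state and justify this first, since both halves of the proof reduce to it.

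For $\Sen$-maximality I would appeal to Dfn.~\ref{sen-dfn}: the definition domain of $p\Sen([\chi]) \co \Sen(\dom{[\chi]}) \pto \Sen(\cod{[\chi]})$ is $\Sen(\DOM [\chi]) = \Sen(\Sigma) = \Sen(\dom{[\chi]})$, i.e.\ the whole source. A partial function whose definition domain is its entire source is total, so $p\Sen([\chi])$ is total and $[\chi]$ is $\Sen$-maximal.

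For $\Mod$-maximality I would unfold Dfn.~\ref{mod-dfn} for an arbitrary $\cod{[\chi]}$-model $M'$:
\[
p\Mod([\chi])M' = \{ M \in |\Mod(\Sigma)| \mid \Mod(\DOM [\chi] \subseteq \Sigma)M = \Mod([\chi]^0)M' \}.
\]
Using the identification above, $\Mod(\DOM [\chi] \subseteq \Sigma) = \Mod(1_\Sigma) = 1_{\Mod(\Sigma)}$ and $[\chi]^0 = \chi$, so the membership condition collapses to $M = \Mod(\chi)M'$. Thus $p\Mod([\chi])M' = \{ \Mod(\chi)M' \}$ is a singleton, giving $\Mod$-maximality; being both $\Sen$- and $\Mod$-maximal, $[\chi]$ is total.

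I do not expect a genuine obstacle: the result is a routine unfolding of the definitions, and the only point that needs care is the identification $\DOM [\chi] = \dom{[\chi]}$ (equivalently, that the domain inclusion of $[\chi]$ is an identity). Once that is in place, $\Sen$-maximality is immediate and $\Mod$-maximality follows solely from functoriality of $\Mod$ on the identity $1_\Sigma$.
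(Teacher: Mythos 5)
Your proof is correct and is exactly the routine definitional unfolding that the paper intends but leaves implicit (the Fact is stated without proof as an ``expected straightforward property''): the key identification $\DOM\,[\chi] = \dom{[\chi]}$, so that the domain inclusion is $1_{\dom{\chi}}$, immediately gives totality of $p\Sen([\chi])$ and collapses the membership condition in Dfn.~\ref{mod-dfn} to $M = \Mod(\chi)M'$. Indeed, your singleton computation $p\Mod([\chi])M' = \{\Mod(\chi)M'\}$ is precisely the identity the paper itself invokes later, as equation \eqref{strict-eq1}, in proving that $[\chi]$ is $p\Mod$-strict.
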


The following is yet another semantic technical expression of the
totalness of the signature morphisms that has been used in
\cite{3/2Inst} in connection to model amalgamation properties. 

% \begin{definition}[Model strictness]\label{model-strict-dfn}
In a $\3/2$-institution a signature morphism $\varphi$ is
\emph{$\Mod$-strict} when for each signature morphism $\theta$ such
that $\cod{\theta} = \dom{\varphi}$ we have that 
\[
\Mod(\varphi);\Mod(\theta)=\Mod(\theta;\varphi).
\]
%\end{definition} 

In general, in many concrete situations of interest -- $\3/2 \PL$ and
$\3/2 \MSA$ included -- a signature morphism is $\Mod$-strict whenever 
it is total. 

\begin{proposition}
For any signature morphism $\chi$ in $\I$, $[\chi]$ is $p\Mod$-strict
(in  $\3/2 \I$).   
\end{proposition}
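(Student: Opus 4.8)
The plan is to unfold both sides of the required equation $p\Mod([\chi]);p\Mod(\theta) = p\Mod(\theta;[\chi])$ --- where $\theta$ is an arbitrary partial $\Sign$-morphism with $\cod{\theta} = \dom{[\chi]} = \dom{\chi}$ --- and to observe that they coincide, the enabling fact being that $[\chi]$ has full definition domain. First I would record the two structural consequences of $\DOM [\chi] = \dom{\chi}$ and $[\chi]^0 = \chi$ (Fact~\ref{embed-fact}). On the one hand, by Dfn.~\ref{mod-dfn} the defining inclusion $\DOM [\chi] \subseteq \dom{\chi}$ is an identity, so $\Mod(\DOM [\chi] \subseteq \dom{\chi})$ is the identity functor and hence $p\Mod([\chi])M'' = \{\Mod(\chi)M''\}$ is a singleton for every $\cod{\chi}$-model $M''$; in particular $[\chi]$ is $\Mod$-maximal. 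On the other hand, in the composition diagram~\eqref{diag-comp} for $\theta;[\chi]$ the pullback $(\Diamond)$ is taken against the inclusion $\DOM [\chi] \subseteq \dom{\chi}$, which is an identity; the pullback is therefore trivial, yielding $\DOM(\theta;[\chi]) = \DOM\theta$ with the comparison inclusion into $\DOM\theta$ being an identity, and $(\theta;[\chi])^0 = \theta^0;\chi$.

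Then I would compute both sides. For the left-hand side, the Kleisli composition in $\3/2 (\CAT_{\!\!\P})$ sends $M''$ to $\bigcup_{M' \in p\Mod([\chi])M''} p\Mod(\theta)M'$; since $p\Mod([\chi])M''$ is the singleton $\{\Mod(\chi)M''\}$, this union collapses to $p\Mod(\theta)(\Mod(\chi)M'')$, which by Dfn.~\ref{mod-dfn} is the set of $\dom{\theta}$-models $M$ with $\Mod(\DOM\theta \subseteq \dom{\theta})M = \Mod(\theta^0)(\Mod(\chi)M'')$. For the right-hand side, using the description of $\theta;[\chi]$ above together with the (contravariant) functoriality of $\Mod$, $p\Mod(\theta;[\chi])M''$ is the set of $M$ with $\Mod(\DOM\theta \subseteq \dom{\theta})M = \Mod(\theta^0;\chi)M'' = \Mod(\theta^0)(\Mod(\chi)M'')$. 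The two defining conditions are literally the same, so the two sets coincide; the argument on model homomorphisms is identical, since by Dfn.~\ref{mod-dfn} the reducts act on arrows exactly as on objects.

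Finally I would note that the inclusion $p\Mod([\chi]);p\Mod(\theta) \subseteq p\Mod(\theta;[\chi])$ is already guaranteed by the lax property of $p\Mod$ established in Prop.~\ref{mod-prop} (instantiated with $\varphi = \theta$ and $\varphi' = [\chi]$), so the genuine content lies in the reverse inclusion. The single step where totalness of $[\chi]$ is indispensable is the collapse of the Kleisli union to one reduct: it is precisely the $\Mod$-maximality of $[\chi]$ --- its singleton reducts --- that upgrades the general lax inclusion of Prop.~\ref{mod-prop} to an equality. I do not expect any obstacle beyond bookkeeping; the triviality of the pullback against an identity inclusion and the singleton reduct of $[\chi]$ do all the work, and no hypothesis on $\Sen$ or on pullbacks beyond those already standing is required.
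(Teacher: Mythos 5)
Your proof is correct and takes essentially the same route as the paper's: both arguments rest on the singleton reduct $p\Mod([\chi])M'' = \{\Mod(\chi)M''\}$ and on the collapse of the pullback square $(\Diamond)$ of diagram \eqref{diag-comp}, giving $\DOM(\theta;[\chi]) = \DOM\theta$ and $(\theta;[\chi])^0 = \theta^0;\chi$, after which membership in either side unfolds to the identical condition $\Mod(\DOM\theta \subseteq \dom{\theta})M = \Mod(\theta^0)(\Mod(\chi)M'')$. The only cosmetic difference is that the paper cites the laxity of $p\Mod$ (Prop.~\ref{mod-prop}) for one inclusion and verifies only the reverse one, whereas you compute both sides outright and note the lax inclusion as a remark.
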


\begin{proof}
Let $\chi \in \Sign$ (the category of the signatures of $\I$).
Let $\varphi\in p\Sign$ such that $\cod{\varphi} = \dom{\chi}$. 
We have to show that 
\[
p\Mod([\chi]);p\Mod(\varphi) = 
p\Mod(\varphi;[\chi]).
\]
Since $p\Mod$ is lax we have to show only that the right hand side is
less than the left hand side. 
Therefore let $M'' \in \Mod(\cod{\chi})$ and let 
$M \in p\Mod(\varphi;[\chi])M''$. 
We have to show that $M \in
p\Mod(\varphi)(p\Mod([\chi])M'')$.  

Note that 
\begin{equation}\label{strict-eq1}
p\Mod([\chi])M'' = \{ \Mod(\chi)M'' \}
\end{equation}
and that the collapse of the square $(\Diamond)$ in diagram
\eqref{diag-comp} to $\varphi^0$ means also that   
\begin{equation}\label{strict-eq2} 
\DOM (\varphi;[\chi]) = \DOM \varphi
\mbox{ \ and \ } 
(\varphi;[\chi])^0 = \varphi^0;\chi. 
\end{equation}
By \eqref{strict-eq2} and by the definition of $p\Mod$, 
$M \in p\Mod(\varphi;[\chi])M''$ translates to 
\[
\Mod(\varphi^0)(\Mod(\chi)M'') = 
\Mod(\DOM \varphi \subseteq \dom{\varphi})M.
\]
which means that $M \in p\Mod(\varphi)(\Mod(\chi)M'')$.
By \eqref{strict-eq1} it follows that $M \in
p\Mod(\varphi)(p\Mod([\chi])M'')$. 
\end{proof}

\subsection{Lax cocones and model amalgamation}

The main proposal of \cite{3/2Inst} regarding the $\3/2$-institution
theoretic foundations of conceptual blending is based upon two
concepts: lax cocones and model amalgamation. 
Both of them constitute $\3/2$-institution theoretic extension of
corresponding ordinary institution theoretic concepts. 
In this section we develop a result on the existence of lax cocones
and model amalgamation in $\3/2 \I$ based upon existence of cocones
and model amalgamation in the base institution $\I$. 
By considering the mere fact that these properties are very common in
concrete institutions, this result is applicable to a wide range of
concrete situations. 

We start by recalling the well established notion of model
amalgamation in ordinary institution theory, then we move to recalling
its $\3/2$-institution theoretic extension from \cite{3/2Inst} and
finally we develop the above mentioned result. 

Model amalgamation properties for institutions formalize the possibility of
amalgamating models of different signatures when they are
consistent on some kind of generalized `intersection' of signatures.
It is one of the most pervasive properties of concrete institutions
and it is used in  a crucial way in many institution theoretic
studies.
A few early examples are 
\cite{sannella-tarlecki88,tarlecki86,jm-granada89,modalg}. 
For the role played by this property in specification theory and in
institutional model theory see \cite{sannella-tarlecki-book} and
\cite{iimt}, respectively.

A \emph{model of a diagram of signature morphisms in an institution}
consists of a model $M_k$ for each signature $\Sigma_k$ in the
diagram such that for each signature morphism $\varphi \co \Sigma_i
\to \Sigma_j$ in the diagram we have that $M_i = \Mod(\varphi)M_j$.    

A commutative square of signature morphisms
$$\xy
\xymatrix { 
\Sigma \ar[r]^{\varphi_1}   \ar[d]_{\varphi_2}
    & \Sigma_1       \ar[d]^{\theta_1} \\
\Sigma_2  \ar[r]_{\theta_2}
    & \Sigma'
}
\endxy$$
is an \emph{amalgamation square} if and only if each model of the span
$(\varphi_1,\varphi_2)$ admits an unique completion to a model of the
square. 
When we drop off the uniqueness requirement we call this a 
\emph{weak model amalgamation square}.

In most of the institutions formalizing conventional or non-conventional 
logics, pushout squares of signature morphisms are model amalgamation
squares  \cite{iimt}.  

In the literature there are several more general concepts of model
amalgamation. 
One of them is model amalgamation for cocones of arbitrary diagrams
(rather than just for spans), another one is model amalgamation for
model homomorphisms. 
Both are very easy to define by mimicking the definitions presented
above. 
While the former generalisation is quite relevant for the intended
applications of our work, the latter is less so since at this moment
model homomorphisms do not seem to play any role in conceptual
blending or in merging of software changes. 
Moreover amalgamation of model homomorphisms is known to play a role
only in some developments in institution-independent model theory 
\cite{iimt}, but even there most involvements of model amalgamation
refers only to amalgamation of models.  

In \cite{3/2Inst} this notion is extended to $\3/2$-institutions.
For the sake of simplicity of presentation, this is presented for lax
cocones of spans, the general concept for lax cocones over arbitrary
diagrams of signature morphisms being an obvious generalisation.  

%\begin{definition}\label{consistent-dfn}
A \emph{model for a diagram of signature morphisms} in a
$\3/2$-institution consists of a model $M_k$ for each signature
$\Sigma_k$ in the diagram such that for each signature morphism
$\varphi \co \Sigma_i \to \Sigma_j$ in the diagram we have that 
$M_i \in \Mod(\varphi)M_j$. 

The diagram is \emph{consistent} when it has at least one model. 
%\end{definition} 

%\begin{definition}[Model amalgamation in
% $\3/2$-institutions]\label{amalg-dfn} 
In any $\3/2$-institution, a lax cocone for a span in the
$\3/2$-category of the signature morphisms  
\[
\xymatrix @C-2em{
 & & \Sigma & & \\
\Sigma_1 \ar@{.>}[urr]^{\theta_1} & { \ \ \ \ \leq} & &
{ \geq \ \ \ \ } &
\Sigma_2 \ar@{.>}[ull]_{\theta_2}\\
 &  & \Sigma_0 \ar[ull]^{\varphi_1} \ar[urr]_{\varphi_2} \ar@{.>}[uu]_{\theta_0}& &
}
\]
has \emph{model amalgamation} when each model of the span admits an
unique completion to a model (called the \emph{amalgamation}) of the
lax cocone.  

When dropping the uniqueness condition, the property is called
\emph{weak model amalgamation}.  
%\end{definition}

\begin{proposition}\label{lax-cocone-prop}
Let $\I$ be an inclusive institution with pullbacks of semi-inclusive
cospans. 
\begin{enumerate}

\item If each span of signature morphisms in $\I$ admits a cocone 
% \begin{itemize}

% % \item the inclusion system of its category of signatures $\Sign$ has
% %   unions, 

% \item its sentence functor $\Sen$ satisfies the pullback preservation
%   property of Cor.~\ref{lax-sen-cor}, and

% \item each span of signature morphisms admits a cocone,

% \end{itemize}
then each span of signature morphisms $(\varphi_1,\varphi_2)$ in 
$\3/2 \I$ admits a lax cocone. 
\[
\xymatrix @C-2em{
 & & \Sigma & & \\
\Sigma_1 \ar@{.>}[urr]^{\theta_1} & { \ \ \ \ \leqq} & &
{ \geq \ \ \ \ } &
\Sigma_2 \ar@{.>}[ull]_{\theta_2}\\
 &  & \Sigma_0 \ar[ull]^{\varphi_1} \ar[urr]_{\varphi_2}
 \ar@{.>}[uu]_{\theta_0}& & 
}
\]

\item If each span of signature morphisms in $\I$ admits a cocone
that has (weak) model amalgamation, then each span of signature
morphisms in $\3/2 \I$ admits a lax cocone that has (weak) model
amalgamation. 

\end{enumerate}
\end{proposition}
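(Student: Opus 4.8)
The plan is to build the lax cocone for the span $(\varphi_1,\varphi_2)$ by re-running the two-stage construction from the proof of Prop.~\ref{pushout-partial-prop}, and then to obtain amalgamation by feeding a span model through the two model amalgamations that the construction is assembled from. For part~(1), given $\varphi_k \co \Sigma_0 \pto \Sigma_k$ ($k=1,2$), I would first pick \emph{any} cocone $(\alpha_k,\chi_k)$ in $\Sign$ for the span $(\varphi^0_k, \DOM\varphi_k \subseteq \Sigma_0)$ and \emph{any} cocone $(\beta_1,\beta_2)$ for $(\alpha_1,\alpha_2)$, then set $\theta^0_k = \chi_k;\beta_k$ and $\theta^0_0 = \alpha_k;\beta_k$ (well defined since $\alpha_1;\beta_1 = \alpha_2;\beta_2$). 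The cocone equations $\varphi^0_k;\chi_k = (\DOM\varphi_k \subseteq \Sigma_0);\alpha_k$ give, exactly as in diagram~\eqref{pushout-proof-diag}, that $\varphi_k;\theta_k \leq \theta_0$ in $p\Sign$, so $(\theta_0,\theta_1,\theta_2) = ([\theta^0_0],[\theta^0_1],[\theta^0_2])$ is a lax cocone. The only change from Prop.~\ref{pushout-partial-prop} is that here I invoke the mere \emph{existence} of cocones rather than pushouts, and correspondingly drop the universal property; no new work is needed.

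For part~(2) I would run the same construction but \emph{choose} the two cocones $(\alpha_k,\chi_k)$ and $(\beta_1,\beta_2)$ to be ones with (weak) model amalgamation, which exist by hypothesis, and then show the resulting lax cocone amalgamates. The first thing to record is a translation of the goal. A model of the span in $\3/2 \I$ is a triple $(M_0,M_1,M_2)$ with $M_0 \in p\Mod(\varphi_k)M_k$, i.e.\ by Dfn.~\ref{mod-dfn} $N_k := \Mod(\DOM\varphi_k \subseteq \Sigma_0)M_0 = \Mod(\varphi^0_k)M_k$ for $k=1,2$. Since each $\theta_k = [\theta^0_k]$ is total, $p\Mod(\theta_k)M$ is the singleton $\{\Mod(\theta^0_k)M\}$; hence completing $(M_0,M_1,M_2)$ to a model of the lax cocone amounts precisely to producing $M$ over $\Sigma$ with $\Mod(\theta^0_0)M = M_0$ and $\Mod(\theta^0_k)M = M_k$ ($k=1,2$).

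The core step is then a two-stage chaining of the chosen amalgamations. For each $k$, the triple $(M_0,M_k,N_k)$ is exactly a model of the base span $(\varphi^0_k,\DOM\varphi_k \subseteq \Sigma_0)$, so I amalgamate it to get a unique $M'_k$ over $\Sigma'_k$ with $\Mod(\alpha_k)M'_k = M_0$ and $\Mod(\chi_k)M'_k = M_k$. The equalities $\Mod(\alpha_1)M'_1 = M_0 = \Mod(\alpha_2)M'_2$ say that $(M'_1,M'_2)$ is a model of the base span $(\alpha_1,\alpha_2)$, so a second amalgamation yields a unique $M$ over $\Sigma$ with $\Mod(\beta_k)M = M'_k$. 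Contravariant functoriality of $\Mod$ then closes the loop: $\Mod(\theta^0_k)M = \Mod(\chi_k)(\Mod(\beta_k)M) = \Mod(\chi_k)M'_k = M_k$ and likewise $\Mod(\theta^0_0)M = \Mod(\alpha_k)(\Mod(\beta_k)M) = \Mod(\alpha_k)M'_k = M_0$, so $M$ is the desired amalgamation. Uniqueness runs backwards: any lax-cocone model $M$ forces $M'_k := \Mod(\beta_k)M$ to satisfy the first-stage constraints, hence to be the unique first amalgamation, and then forces $M$ to be the unique second amalgamation; dropping uniqueness everywhere gives the weak case.

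The step I expect to be the main obstacle is the bookkeeping that makes the two stages compose: verifying that the feet delivered by the first amalgamation, namely $\Mod(\alpha_k)M'_k = M_0$, are exactly the compatibility data needed for $(M'_1,M'_2)$ to be a legitimate base model of $(\alpha_1,\alpha_2)$, and carefully matching the index conventions in diagram~\eqref{pushout-proof-diag} so that the common value $\theta^0_0 = \alpha_1;\beta_1 = \alpha_2;\beta_2$ reduces $M$ back to the single model $M_0$ on $\Sigma_0$. The other point requiring care is the reduction of the lax-cocone model conditions $M_k \in p\Mod(\theta_k)M$ to the sharp equalities $M_k = \Mod(\theta^0_k)M$, which rests entirely on the totality of the $\theta_k$ and hence on the single-valuedness of their reducts.
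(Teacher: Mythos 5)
Your proof is correct and takes essentially the same route as the paper: a two-stage construction in $\Sign$ (cocones over the spans $(\varphi^0_k,\ \DOM\varphi_k \subseteq \Sigma_0)$, then a cocone over the resulting pair), followed by a two-stage amalgamation of models, with the totality of $\theta_1,\theta_2$ used exactly as in the paper to reduce the lax-cocone model conditions to the equations $M_k = \Mod(\theta^0_k)M$. The only difference is that you fix $\DOM\theta_0 = \Sigma_0$, so your $\theta_0$ is total, whereas the paper parameterizes the construction by an arbitrary choice of $\DOM\theta_0$ with $\DOM\varphi_k \subseteq \DOM\theta_0 \subseteq \Sigma_0$ (introducing an intermediate model $N_0$); your choice is precisely the paper's own default instance and suffices for the existence claims being proved.
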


\begin{proof}
1.
We define the a lax cocone for $(\theta_0,\theta_1,\theta_2)$ for the
span $(\varphi_1,\varphi_2)$ as follows.
We consider any $\DOM \theta_0 \subseteq \Sigma_0$ such that 
$\DOM \theta_0 \subseteq \DOM \varphi_k$, $k=1,2$. 
Note that in general there may be several choices for $\DOM \theta_0$,
and definitely at least one exists by letting $\DOM \theta_0 = \Sigma_0$. 
In $\I$ we consider successively:
\begin{itemize}

\item for $k = 1,2$, $(u_k,v_k)$ a cocone for the span $(\varphi_k^0,
  \DOM \varphi_k \subseteq \DOM \theta_0)$, and  

\item $(w_1,w_2)$ a cocone for the span $(v_1,v_2)$.

\end{itemize}
For $k=1,2$ let $\theta_k^0 = u_k ; v_k$. 
\begin{equation}\label{lax-cocone-diag}
\xymatrix @C-1em {
 & & \Sigma & & \\
 & \Sigma'_1 \ar@{.>}[ur]_{w_1} & & \Sigma'_2 \ar@{.>}[ul]^{w_2} & \\
\Sigma_1 \ar@{.>}[ur]_{u_1} \ar@/^2em/@{.>}[uurr]^{\theta^0_1} & &
\DOM \theta_0 \ar@{.>}[ul]^{v_1} \ar@{.>}[uu]_{\theta^0_0} \ar@{.>}[ur]_{v_2}
\ar[dd]_{\subseteq} & & \Sigma_2 \ar@{.>}[ul]^{u_2} \ar@/_2em/@{.>}[uull]_{\theta^0_2}\\
 & \DOM \varphi_1 \ar[ul]_{\varphi^0_1} \ar[ur]^{\subseteq}
 \ar[dr]_{\subseteq}
 \ar `l/40pt[u] `[uuu]^{(\varphi_1;\theta_1)^0} [uuur]& & 
 \DOM \varphi_2 \ar[ur]^{\varphi^0_2}
 \ar[ul]_{\subseteq}\ar[dl]^{\subseteq}
 \ar `r/40pt[u] `[uuu]_{(\varphi_2;\theta_2)^0} [uuul]& \\
 & & \Sigma_0 & & 
}
\end{equation}
Then $\theta^0_k$, $k = 0,1,2$ define the $\3/2 \I$ signature morphisms 
$\theta_0 \co \Sigma_0 \pto \Sigma$, 
$\theta_k = [\theta^0_k] \co \Sigma_k \pto \Sigma$, $k=1,2$. 
So $\theta_1$ and $\theta_2$ are total, which implies
that for $k=1,2$,  
\[
\DOM(\varphi_k ; \theta_k) = \DOM\varphi_k
\mbox{ \ and \ }
(\varphi_k ; \theta_k)^0 = \varphi_k^0;\theta_k^0.
\]
By chasing diagram \eqref{lax-cocone-diag} we establish that for
$k=1,2$, $\varphi^0_k ; \theta^0_k = 
(\DOM \varphi_k \subseteq \DOM \theta_0) ; \theta^0_0$.
It thus follows that $\varphi_k ; \theta_k \leq \theta_0$.  

2. 
For the second part of the proposition we consider
models $M_0, M_1, M_2$ such that for $k=1,2$, 
$M_0 \in p\Mod(\varphi_k)M_k$. 
When considering the three cocones for the spans of signature morphisms
in $\I$ as above, we may also consider them to admit weak model
amalgamation.
  
For $k=1,2$ let $N_k = \Mod(\DOM \varphi_k \subseteq \Sigma_0)M_0$ and
let $N_0 = \Mod(\DOM \theta_0 \subseteq \Sigma_0)M_0$.
For $k=1,2$ we have that $\Mod(\varphi^0_k)M_k = N_k = \Mod(\DOM
\varphi_k \subseteq \DOM \theta_0)N_0$.
\begin{itemize}

\item for $k=1,2$ let  $M'_k$ be the amalgamation of $N_0$ and $M_k$
  (in $\I$, for the cocone $(u_k,v_k)$ of the span $(\varphi_k^0,
  \DOM \varphi_k \subseteq \DOM \theta_0)$), 
  and 

\item let $M$ be the amalgamation of $M'_1$ and $M'_2$ (in $\I$, for
  the cocone $(w_1,w_2)$ of the span $(v_1,v_2)$). 

\end{itemize}
Then $M$ is the amalgamation of $M_0, M_1, M_2$ for the lax cocone
$(\theta_0,\theta_1,\theta_2)$ of the span $(\varphi_1,\varphi_2)$. 

For the strict (non-weak) case, it is enough to note that 
\begin{itemize}

\item $N_0$ is uniquely determined by the equation $\Mod(\DOM \theta_0
  \subseteq \Sigma_0)M_0 = N_0$ (which is a consequence of 
$M_0 \in p\Mod(\theta_0)M$),

\item for $k=1,2$, $M'_k$ are uniquely determined by $M_k$ (that are
  given) and $N_0$, and 

\item $M$ is uniquely determined by $M'_1$ and $M'_2$. 

\end{itemize}
\end{proof}
The condition that each span of signature morphisms admits a cocone
is usually easy in concrete situations, for example the stronger
version of existence of pushout cocones holds in most situations of
interest.   
Very often pushout cocones have model amalgamation property at least
in its weak form (see for example
\cite{iimt,sannella-tarlecki-book}). 
% but also the reader may see this directly from
% Prop.~\ref{amg-seed-prop} by making the exercise of a downgrading of
% the $\3/2$-institutional context there to that of 1-institutions).  

Note that in principle there are several choices for $\DOM \theta_0$. 
Each of them determines a different lax cocone of signature morphisms
in $\3/2 \I$.  
Another parameter that may lead to different constructed variants of
the lax cocone is the choice of the cocones in the category of the
signatures of $\I$; for example in the weak model amalgamation case, in
general there can be multiple choices.
%  (as indicated for example by the
% 1-institutional variant of Prop.~\ref{weak-amg-prop}).

%\TODO{legatura cu near pushouts prin jointly epimorphic families}

\subsection{A taxonomy of ``partial'' theory morphisms}

In \cite{3/2Inst} there is a study of how the concept of theory can be
extended to the $\3/2$-institutional framework, which is motivated by
the applications to conceptual blending.
In Goguen's approach to conceptual blending
\cite{Goguen:Algebraic-Semiotics-1999,Goguen:Mathematical-Models-of-Cognitive-Space-and-Time-2006}
concept translation is modelled as translation of logical theories
(but this has not been achieved there in a proper way due to the lack
of an institution theoretic framework). 
While theories in $\3/2$-institutions are the same as theories in
1-institutions, the $\3/2$-institution theoretic concept of theory
morphism is much more subtle because of the partiality of the sentence
translations.    
In \cite{3/2Inst} it is shown how mathematically there are four
possible extensions of the institution theoretic concept of theory
morphism to $\3/2$-institutions but also that only the following two
of these make real sense technically and in the applications.  

In a $\3/2$-institution given two theories $(\Sigma,E)$ and
$(\Sigma',E')$, a signature morphism 
$\varphi \co \Sigma \to \Sigma'$ is  
\begin{itemize}

\item a \emph{weak $\3/2$-theory morphism} $(\Sigma,E) \to
  (\Sigma',E')$ when $\Sen(\varphi)E^\bullet
  \subseteq E'^\bullet$,  

\item a \emph{strong $\3/2$-theory morphism} $(\Sigma,E) \to
  (\Sigma',E')$ when for each $\Sigma'$-model $M'$ such that $M'
  \models E'$ there exists $M \in \Mod(\varphi)M'$ such that $M
  \models E$.

\end{itemize}

The relationship between these two concepts is an inclusive one:

\begin{fact}\cite{3/2Inst}\label{weak-strong-fact}
Each strong $\3/2$-theory morphism is weak. 
\end{fact}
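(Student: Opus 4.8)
The plan is to unfold the two definitions and reduce the required inclusion $\Sen(\varphi)E^\bullet \subseteq E'^\bullet$ to a pointwise satisfaction statement that the Satisfaction Condition of the $\3/2$-institution then discharges. Since $\Sen(\varphi)$ is a partial function, its image $\Sen(\varphi)E^\bullet$ consists precisely of the sentences $\Sen(\varphi)e$ ranging over $e \in E^\bullet$ that lie in $\DOM(\Sen(\varphi))$. Hence it suffices to fix such an $e$ and prove $\Sen(\varphi)e \in E'^\bullet$, which by definition of closure means $E' \models_{\cod{\varphi}} \Sen(\varphi)e$.

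To establish this, I would take an arbitrary $\cod{\varphi}$-model $M'$ with $M' \models E'$ and argue that $M' \models_{\cod{\varphi}} \Sen(\varphi)e$. Applying the strongness hypothesis to $M'$ produces a reduct $M \in \Mod(\varphi)M'$ with $M \models E$. Because $e \in E^\bullet$ unfolds to $E \models_{\dom{\varphi}} e$, the fact that $M \models E$ forces $M \models_{\dom{\varphi}} e$.

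The last step transfers satisfaction across $\varphi$. Since $e \in \DOM(\Sen(\varphi))$ and $M \in \Mod(\varphi)M'$, the Satisfaction Condition \eqref{sat-cond-eq} is applicable with $\rho = e$, and it converts $M \models_{\dom{\varphi}} e$ into $M' \models_{\cod{\varphi}} \Sen(\varphi)e$. As $M'$ was an arbitrary model of $E'$, this yields $E' \models_{\cod{\varphi}} \Sen(\varphi)e$, i.e.\ $\Sen(\varphi)e \in E'^\bullet$, which is exactly what the weakness condition demands.

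The computation is otherwise routine; the only point requiring care is the partiality bookkeeping, namely ensuring that every sentence contributing to $\Sen(\varphi)E^\bullet$ genuinely lies in $\DOM(\Sen(\varphi))$, so that both the image and the Satisfaction Condition are defined on it. I do not expect this to be a real obstacle, since it is automatic once the image of a partial function is read correctly, but it is the one place where the $\3/2$-institutional setting differs from the ordinary one and so deserves an explicit remark.
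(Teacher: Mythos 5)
Your proof is correct: the paper itself states this fact without proof (it is imported from the cited reference \cite{3/2Inst}), and your argument is exactly the natural one that discharges it --- unfold $\Sen(\varphi)E^\bullet$ as the image of $E^\bullet \cap \DOM(\Sen(\varphi))$ under the partial function, use strongness to produce a reduct $M \in \Mod(\varphi)M'$ satisfying $E$, and transfer $M \models e$ to $M' \models \Sen(\varphi)e$ via the $\3/2$-institutional Satisfaction Condition, which applies precisely because $e \in \DOM(\Sen(\varphi))$ and $M \in \Mod(\varphi)M'$. Your closing remark on the partiality bookkeeping is also the right one: that is the only point where this differs from the classical fact that theory morphisms compose with semantic closure in ordinary institutions.
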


In the light of the above considerations, given an institution $\I$
endowed with an inclusion system and with pullbacks of semi-inclusive
cospans, there are four ways to think about  partiality for theory
morphisms.  
These four can be grouped as follows: 
\begin{enumerate}

\item Consider the closed and the strong inclusion systems for
  theories in $\I$; each of them determines a concept of partial
  theory morphisms by considering the category of closed theories
  in the role of $\Sign$ in Dfn.~\ref{partial-def}.
  For this it is necessary to recall (from \cite{iimt}) that in any
  institution the pullbacks of (closed) theories are inherited from the
  underlying category of the signatures.
  Let us call the former concept \emph{closed-partial theory morphism}
  and the latter one \emph{strong-partial theory morphism}.  

\item Consider $\3/2 \I$, the $\3/2$-institution built on top of
  $\I$. 
  Then we may consider the two significant concepts of $\3/2$-theory
  morphisms discussed above, the weak and the strong one. 

\end{enumerate}

In the following we analyse and establish the relationships between
these four concepts of partiality for theory morphisms, a first step
being given by Fact \ref{weak-strong-fact}. 
The following is a crucial result in this direction:

\begin{proposition}
Given an inclusive institution $\I = (\Sign,\Sen,\Mod,\models)$ with 
pullbacks of semi-inclusive cospans, then the category of weak
$\3/2$-theory morphisms in 
$\3/2 \I = (p\Sign, p\Sen, p\Mod,\models)$ is equivalent to the
category of closed-partial theory morphisms.  
\end{proposition}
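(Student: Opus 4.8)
The plan is to exhibit the equivalence through a forgetful functor that sends a closed-partial theory morphism to its underlying partial signature morphism, recovering closed theories on the weak side by closing up arbitrary theories. First I would unpack both notions down to the same data. A closed-partial theory morphism $(\Sigma,E) \pto (\Sigma',E')$ (with $E,E'$ closed) is, by Dfn.~\ref{partial-def} applied to the category of closed theories of $\I$ equipped with the \emph{closed} inclusion system, a closed theory $(\Sigma_0,E_0)$ that is a closed-inclusion subtheory of $(\Sigma,E)$ together with a theory morphism $\varphi^0 \co (\Sigma_0,E_0) \ra (\Sigma',E')$. The closed inclusion forces $\Sigma_0 \subseteq \Sigma$ and $E_0 = \Sen(\Sigma_0)\cap E$, so the underlying datum is precisely a partial $\Sign$-morphism $\varphi \co \Sigma \pto \Sigma'$ with $\DOM\varphi = \Sigma_0$, and the theory-morphism condition reads $\Sen(\varphi^0)E_0 \subseteq E'$.

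The heart of the argument, which I would isolate first, is the computation that for such a partial signature morphism, Dfn.~\ref{sen-dfn} together with the inclusivity of $\Sen$ gives $p\Sen(\varphi)E^\bullet = \Sen(\varphi^0)(\Sen(\DOM\varphi)\cap E^\bullet)$. When $E$ is closed this is exactly $\Sen(\varphi^0)E_0$, so the weak $\3/2$-theory morphism condition $p\Sen(\varphi)E^\bullet \subseteq E'^\bullet$ coincides (for closed $E,E'$) with the closed-partial condition $\Sen(\varphi^0)E_0 \subseteq E'$.

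Second, I would set up the functor and establish full faithfulness. Since a closed-partial theory morphism carries no data beyond its underlying partial signature morphism—the domain closed theory $E_0$ being determined by $\DOM\varphi$ via the closed inclusion—and since a weak $\3/2$-theory morphism is by definition just a partial signature morphism satisfying a property, the computation above yields a bijection between the two hom-sets over any pair of closed theories. The object mismatch (weak morphisms are defined between arbitrary theories, closed-partial ones between closed theories) I would resolve by the assignment $(\Sigma,E) \mapsto (\Sigma,E^\bullet)$: the weak condition depends only on the closures $E^\bullet,E'^\bullet$, so this closure functor is full, faithful, and essentially surjective onto closed theories.

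The main obstacle is functoriality, namely that the underlying-morphism assignment preserves composition. Composition of closed-partial theory morphisms is computed, via Dfn.~\ref{partial-def}, by a pullback of a semi-inclusive cospan \emph{in the category of closed theories}, whereas composition in $p\Sign$ uses the corresponding pullback \emph{in} $\Sign$. Here I would invoke the inheritance of pullbacks of closed theories from the underlying signatures (recalled from \cite{iimt}): the forgetful functor from closed theories to $\Sign$ is inclusive and preserves pullbacks of semi-inclusive cospans, hence it carries the closed-theory pullback square $(\Diamond)$ of diagram \eqref{diag-comp} to the signature pullback square used to compose in $p\Sign$. Consequently the composite closed-partial theory morphism and the $p\Sign$-composite of the underlying partial signature morphisms share the same domain signature and the same $\varphi^0$-component, so the assignment is functorial; identities match trivially. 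Combining the hom-set bijection with this functoriality and the closure functor yields the asserted equivalence.
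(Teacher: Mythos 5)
Your proposal is correct and takes essentially the same route as the paper's proof: the same functor (closure $(\Sigma,E)\mapsto(\Sigma,E^\bullet)$ on objects, the identical underlying partial signature morphism with forced domain theory $E^\bullet\cap\Sen(\DOM\varphi)$ on morphisms), hinging on the same key identification $p\Sen(\varphi)E^\bullet=\Sen(\varphi^0)(E^\bullet\cap\Sen(\DOM\varphi))$ that equates the weak $\3/2$-theory-morphism condition with the closed-partial one. The only difference is that you spell out the preservation of composition via the inheritance of closed-theory pullbacks from $\Sign$, a step the paper recalls in its setup and then leaves as ``rather straightforward to check.''
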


\begin{proof}
Any weak $\3/2$-theory morphisms in $\3/2 \I$, 
$\varphi \co (\Sigma,E) \to (\Sigma',E')$, gets mapped to the
closed-partial theory morphism 
$\overline{\varphi} \co (\Sigma,E^\bullet) \pto (\Sigma',E'^\bullet)$
\[
\xymatrix @C-1em{
(\Sigma,E^\bullet) \ar@{.>}[rr]^{\overline{\varphi}} & & (\Sigma',E'^\bullet) \\
 & \DOM \overline{\varphi}
   \ar[ul]^\subseteq \ar[ur]_{(\overline{\varphi})^0} & 
}
\]
where  
\begin{itemize}

\item $\DOM \overline{\varphi} = (\DOM \varphi, E^\bullet \cap
  \Sen(\DOM \varphi))$, 

\item $(\overline{\varphi})^0 = \varphi^0$. 

\end{itemize}
That $(\overline{\varphi})^0$ is a theory morphism in $\I$ follows
from 
\[
\Sen(\varphi^0)(E^\bullet \cap \Sen(\DOM \varphi)) \subseteq E'^\bullet
\]
which can be expressed as 
\[
p\Sen(\varphi)E^\bullet \subseteq E'^\bullet
\]
which is exactly the condition that $\varphi$ is a weak $\3/2$-theory
morphism. 

It is rather straightforward to check that this mapping $\varphi
\mapsto \overline{\varphi}$ is functorial and that it is an
equivalence of categories. 
\end{proof}

\begin{corollary}
The following figure shows the relationships between the four concepts
of ``partial'' theory morphisms:

\

\begin{rm}
\nested{%
  strong 3/2-theory morphism%
}{%
  weak 3/2-theory morphism = closed-partial theory morphism%
}{%
  strong-partial theory morphism%
}
\end{rm}
\end{corollary}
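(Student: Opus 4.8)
The plan is to verify, reading the nested boxes from the innermost outwards, the three containments
strong $\3/2$-theory morphism $\subseteq$ weak $\3/2$-theory morphism $=$ closed-partial theory morphism $\subseteq$ strong-partial theory morphism.
Two of the three links are already in hand: the first containment is precisely Fact~\ref{weak-strong-fact}, and the middle equality is the equivalence of categories established in the Proposition immediately above. Hence the only genuinely new step is the outer containment, that every closed-partial theory morphism is a strong-partial theory morphism.

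For this last step the key observation is a direct comparison of the two inclusion systems on the category of closed theories. An abstract inclusion $(\Sigma_0,E_0)\subseteq(\Sigma,E)$ of the \emph{closed} system is characterised by $\Sigma_0\subseteq\Sigma$ together with $E_0=\Sen(\Sigma_0)\cap E$, whereas an abstract inclusion of the \emph{strong} system requires only $\Sigma_0\subseteq\Sigma$, the theory-morphism condition then forcing $E_0\subseteq E$ (as $E$ is closed and $\Sen$ is inclusive). Since $\Sen(\Sigma_0)\cap E\subseteq E$, every closed inclusion is in particular a strong inclusion; thus the closed inclusions form a subclass of the strong inclusions.

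First I would spell out that, by Dfn.~\ref{partial-def} applied to the closed inclusion system, a closed-partial theory morphism $\varphi\co(\Sigma,E)\pto(\Sigma',E')$ is exactly a theory morphism $\varphi^0\co(\Sigma_0,E_0)\to(\Sigma',E')$ together with a closed inclusion $(\Sigma_0,E_0)\subseteq(\Sigma,E)$. By the comparison above this inclusion is also a strong inclusion, so the very same data constitute a strong-partial theory morphism: the theory-morphism requirement on $\varphi^0$ is literally the same in both readings, so nothing is lost. This yields closed-partial $\subseteq$ strong-partial, and chaining the three links gives precisely the nesting drawn in the figure.

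The computations here are light; the main point to be careful about is the inclusion-system comparison of the second paragraph. Specifically, one must check that the domain theory $(\Sigma_0,\Sen(\Sigma_0)\cap E)$ arising from the closed system is genuinely a closed theory and that its inclusion into $(\Sigma,E)$ really qualifies as an abstract inclusion of the strong system. Both reduce to the inclusivity of $\Sen$ and to the standard fact that restricting a closed theory along a signature inclusion again produces a closed theory, so no further hypotheses beyond those already assumed on $\I$ are needed.
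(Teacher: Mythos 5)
Your proof is correct and takes the approach the paper intends: the corollary is left without explicit proof because it amounts to chaining Fact~\ref{weak-strong-fact} (inner containment), the Proposition immediately preceding the corollary (middle equality), and the observation that every closed abstract inclusion of closed theories is in particular a strong abstract inclusion (since $\Sen(\Sigma_0)\cap E\subseteq E$ and both theories are closed), whence every closed-partial theory morphism is a strong-partial one. Your spelled-out verification of this last containment --- including the check that $(\Sigma_0,\Sen(\Sigma_0)\cap E)$ is again a closed theory, which follows from the Satisfaction Condition and the inclusivity of $\Sen$ --- is exactly the routine step the paper leaves implicit, and it is sound.
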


%%%%%%%%%%%%%%%%%%%%%%%%%%%%%%%%%%%%%%%%%%%%%%%%%%%%%%%%%%%%%%%%%%%%
%\section{Conclusions}

%\subsection*{Acknowledgements}

%%%%%%%%%%%%%%%%%%%%%%%%%%%%%%%%%%%%%%%%%%%%%%%%%%%%%%%%%%%%%%%%%%%%%%%%%%%
%\printbibliography

\section*{References}

\bibliographystyle{plain}
\bibliography{/Users/diacon/TEX/tex,ERC2016references}

\end{document}